\newtheorem{theorem}{Theorem}[section]
\newtheorem{lemma}[theorem]{Lemma}
\theoremstyle{definition}
\newtheorem{example}[theorem]{Example}
\theoremstyle{remark}
\numberwithin{equation}{section}
\begin{document}
\title[Schatten norm estimates of Hardy-Steklov operator]{Estimates for Schatten-von Neumann norms of Hardy-Steklov operator}
\author{Elena P. Ushakova}
\curraddr{Department of Mathematics, University of York, York, YO10 5DD, United Kingdom.}\email{elena.ushakova@york.ac.uk}

\address{Computing Centre of the Far-Eastern Branch of the Russian Academy of Sciences, Khabarovsk, 680000, RUSSIA.}
\email{elenau@inbox.ru}

\begin{abstract} Some upper and lower estimates are obtained for Schatten-von Neumann norms of Hardy-Steklov operator in Lebesgue spaces on the semi-axis.
\end{abstract}
\keywords{Hardy-Steklov operator, Lebesgue space, Approximation number, Schatten class, Schatten-von Neumann norm}
\subjclass{47G10}
\maketitle
\section{Introduction}
Given $1<p<\infty$ and $I\subseteq[0,+\infty)=:\mathbb{R}^+$ let $L^p(I)$ denote a collection of measurable functions $f$ on $I$ satisfying $\|f\|_{p,I}:=\bigl(\int_I |f(t)|^p\mathrm{d}t\bigr)^{\frac{1}{p}}<\infty.$
Put $p'=p/(p-1)$ and assume that $v,w$ are non-negative weight functions on $\mathbb{R}^+$ such that $v\in L^{p'}_{\rm loc}(0,\infty)$ and $w\in L^p_{\rm loc}(0,\infty).$ We consider the Hardy -Steklov operator \begin{equation}\label{utro}\mathcal{H}f(x)=w(x)\int_{a(x)}^{b(x)}f(y)v(y)\mathrm{d}y\end{equation}
on $L^p(\mathbb{R}^+)$ with boundaries $a(x)$ and $b(x)$ satisfying the conditions:
\begin {equation*}
\label {3}
\begin {tabular}{ll} (i) &
$a(x),$ $b(x)$ are differentiable and strictly increasing on
$(0,\infty);$\\ (ii) & $a(0)=b(0)=0,$ $a(x)< b(x)$ for
$0<x<\infty,$ $a(\infty)=b(\infty)=\infty.$
\end {tabular}
\end {equation*}
The operator $\mathcal{H}$ is a generalization of a weighted Hardy integral operator of the form $Hf(x)=w(x)\int_0^xf(t)v(t)\mathrm{d}t,$ which mapping properties on $L^p(\mathbb{R}^+)$ were effectively studied in a number of articles and books (see e.g. \cite{KP2003,EEH1988,EEH1997,LL1999} and references there). Having more complicated structure than $H$ the Hardy-Steklov transform \eqref{utro} is significantly more difficult to study. Nevertheless, there exist several results concerning boundedness and compactness properties of $\mathcal{H}$ in Lebesgue spaces on $\mathbb{R}^+$ (see \cite{HS1998} and \cite{SU2001,SU,SUB,JFSA}). In this article we deal with another question, which is related to Schatten ideal behaviour of $\mathcal{H}:L^p(\mathbb{R}^+)\to L^p(\mathbb{R}^+)$. Today, this problem is very poorly understood. Our result is necessary and sufficient conditions for belonging $\mathcal{H}:L^p(\mathbb{R}^+)\to L^p(\mathbb{R}^+)$ to the Schatten-von Neumann class $\mathbf{S}_\alpha.$ Remind that all compact on $X$ linear operators $T:X\to X$ satisfying
$$\|T\|_{\mathbf{S}_\alpha}:=\biggl(\sum_{n\in\mathbb{N}}a_n^\alpha(T)\biggr) ^{\frac{1}{\alpha}}<\infty,\hspace{0.5cm}0<\alpha<\infty,$$ constitute the Schatten classes $\mathbf{S}_\alpha.$  Here $a_n(T)$ is the $n$-th approximation number of the operator $T:X\to X$ defined
$$a_n(T)=\inf_{K\colon\,{\rm rank} K<n}\|T-K\|_{X\to X},\hspace{5mm}n=1,2,\ldots.$$ The quantity $\|\cdot\|_{\mathbf{S}_\alpha}$ is called the Schatten-von Neumann norm. Our necessary and sufficient conditions obtained have a form of a two-sided estimate of $\|\mathcal{H}\|_{\mathbf{S}_\alpha}$ by functionals expressed in terms of $p,$ $v,$ $w$ and $\alpha.$

The pointed problem is not enough studied today, in particular, for the case $p\not=2.$ The only known result is by E.N. Lomakina \cite{LomII}, where the author is giving  a criterion for $\mathcal{H}:L^2(\mathbb{R}^+)\to L^2(\mathbb{R}^+)$ to be in $\mathbf{S}_\alpha$ \cite[Theorem 5]{LomII} and a sufficient condition for belonging $\mathcal{H}:L^p(\mathbb{R}^+)\to L^p(\mathbb{R}^+)$ with $1<p<\infty$ to the class  $\mathbf{S}_\alpha$ for all $\alpha>1$ \cite[(6)]{LomII}. The sufficient condition by E.N. Lomakina was found with help of known upper estimates for the norm $\|H\|_{\mathbf{S}_\alpha}$ of the Hardy operator $H$ \cite{EEH1988,EEH1997,LomII} and on a base of a one-sided inequality binding counting functions of the sequences $a_n(\mathcal{H})$ and $a_n(H)$ \cite[Lemma 2]{LomI}. That result has a form of an upper estimate of $\|H\|_{\mathbf{S}_\alpha}$ by a  discrete functional. In Theorem \ref{lom} of our article we find upper bounds for that functional in a continuous form (see \eqref{p<1} and \eqref{p>1}).

Our main result is a lower estimate for the norm $\|\mathcal{H}:L^p(\mathbb{R}^+)\to L^p(\mathbb{R}^+)\|_{\mathbf{S}_\alpha},$ when $\alpha>0$  (see Theorem \ref{main'}). We also give an alternative upper estimate for $\|\mathcal{H}\|_{\mathbf{S}_\alpha}$ (see Theorem \ref{notmain}), which is different from that by E.N. Lomakina but convertible to the forms analogous to those in Theorem \ref{lom}. In order to obtain our results we directly applied the original method by D.E. Edmunds, W.D. Evans, D.J. Harris (see \cite{EEH1988}, \cite{EEH1997} and \cite{LS2000}), which was initially created for $H$, to the Hardy-Steklov operator of the form \eqref{utro}. Such a way allowed us to obtain a two-sided estimate for $\|\mathcal{H}\|_{\mathbf{S}_\alpha}$ with upper and lower bounds in discrete forms (see Theorems \ref{notmain} and \ref{main'}). Compared to the lower bound the one from above was convertible to an integral form \eqref{int} (or \eqref{int'}). Meanwhile, the estimate from below seemed too "small" for that purpose. That is why our necessary and sufficient conditions are of different types and provide a criterion for certain boundaries $a,b$ and weights $v,w$ only (see Example \ref{ex}). Nevertheless, being the only existing necessary condition for $\mathcal{H}:L^p(\mathbb{R}^+)\to L^p(\mathbb{R}^+),$ $p\not=2,$ to belong to the class $\mathbf{S}_\alpha,$ $\alpha>0,$ the lower estimate in Theorem \ref{main'} is the most valuable of our findings in this work. There might be a possibility to improve the result obtained to a  criterion with the same necessary and sufficient conditions of the forms \eqref{a<p} or \eqref{a>p}. This option is totally depending on outcome of Lemma \ref{lem1}. Unfortunately, in our work this statement has a "viewless zone" or a kind of gap in its necessary part. Filling the gap would help to make the lower bound in the two-sided estimate for $\|\mathcal{H}\|_{\mathbf{S}_\alpha}$ of the same type as its upper one and, therefore, convertible to the form \eqref{a<p} (or \eqref{a>p}) for all $a,b,v$ and $w$. For the moment such an improvement seems impossible.

Throughout  the article products of the form $0\cdot\infty$ are supposed to be equal to 0. We write $A\ll B$ or $A\gg B$ when $A\le c_1 B$ or $A\ge c_2 B$ with constants $c_i,$ $i=1,2,$ which are either absolute or depending on $p$ or $\alpha$ only. $A\approx B$ means $A\ll B\ll A.$ Symbols $\mathbb{Z}$ and $\mathbb{N}$ denote integers $\{k\}$ and naturals $\{n\}$ respectively. $\chi_E$ stands for a characteristic function of a subset $E\subset\mathbb{R}^+.$ We also use $=:$ and $:=$ for marking new quantities.

\section{The main result}
\subsection{Preliminary estimates}
Let $I=(d,e)$ and $W_I:=\int_Iw^p(x)\mathrm{d}x.$ We shall write $$H(x)=\int_{a(x)}^{b(x)}f(y)v(y)\mathrm{d}y,\ \ \ H_I=\frac{1}{W_I}\int_IH(x)w^p(x)\mathrm{d}x.$$ Denote $$\mathcal{K}(I):=\sup_{f\not=0}\frac{\|w(H-H_I)\|_{p,I}}{\|f\|_{p}}.$$

Given $I\subset\mathbb{R}^+$ let $c\in I=[d,e]$ be chosen so that $\int_d^{c}w^p=\frac{1}{2}\int_d^ew^p.$ Denote $w_d=w\chi_{[d,c]},$  $w_e=w\chi_{[c,e]},$ $f_a=v\chi_{[a(d),a(c)]},$ $f_b=v\chi_{[b(c),b(e)]}$ and $${\bar H}(x)=\int_{a(d)}^{a(x)}f(y)v(y)\mathrm{d}y+ \int_{b(x)}^{b(e)}f(y)v(y)\mathrm{d}y.$$
The following statement is giving a two-sided estimate for $\mathcal{K}(I).$
\begin{lemma}\label{lem1}
We have \begin{eqnarray}\label{Ktwo}
\frac{1}{4}\biggl[\,\sup_{f\colon {\rm supp}\,f\subseteq[a(d),a(c)]}\frac{\|w_dH\|_{p}}{\|f\|_p}+
\sup_{f\colon {\rm supp}\,f\subseteq[b(c),b(d)]}\frac{\|w_eH\|_{p}}{\|f\|_p}\biggr]\nonumber\\
\le \mathcal{K}(I)\le 2\sup_{f\in L^p}\frac{\|w{\bar H}\|_{p,I}}{\|f\|_p}.\end{eqnarray}
\end{lemma}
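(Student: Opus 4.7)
The plan is to prove the two inequalities separately, exploiting that $H_I$ is a weighted average of $H(\cdot)$ on $I$. For the upper bound the key observation is that for any constant $K$ one has $\|w(H-H_I)\|_{p,I}\le 2\|w(H-K)\|_{p,I}$. Indeed, from the identity $H_I-K=W_I^{-1}\int_I(H-K)w^p\,dx$ and H\"older's inequality (with the splitting $w^p=w\cdot w^{p-1}$ and using $(p-1)p'=p$), one gets $|H_I-K|\,W_I^{1/p}\le \|w(H-K)\|_{p,I}$, i.e.\ $\|w(K-H_I)\|_{p,I}\le \|w(H-K)\|_{p,I}$, and the claim follows from the triangle inequality. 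Specialising to $K=\int_{a(d)}^{b(e)}f(y)v(y)\,dy$ gives
$$H(x)-K=-\int_{a(d)}^{a(x)}f v-\int_{b(x)}^{b(e)}f v,$$
so $|H(x)-K|$ is pointwise dominated by the analogue of $\bar H(x)$ formed with $|f|$ in place of $f$. Since $\|f\|_p=\||f|\|_p$, taking the supremum over $f$ yields the upper bound in \eqref{Ktwo}.

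For the lower bound I would test $\mathcal{K}(I)$ on the two natural function classes appearing on the left-hand side. Suppose $\operatorname{supp} f\subseteq [a(d),a(c)]$. Since $a$ is strictly increasing, $x\ge c$ implies $a(x)\ge a(c)\ge \sup\operatorname{supp} f$, and therefore $H\equiv 0$ on $[c,e]$. Consequently $H_I W_I=\int_d^c H w^p$, and the balance condition $\int_d^c w^p=\int_c^e w^p=W_I/2$ gives
$$\|w(H-H_I)\|_{p,[c,e]}=|H_I|\Big(\int_c^e w^p\Big)^{1/p}=|H_I|\,\|w\|_{p,[d,c]}.$$
The triangle inequality then produces
$$\|w_d H\|_p=\|wH\|_{p,[d,c]}\le \|w(H-H_I)\|_{p,[d,c]}+|H_I|\,\|w\|_{p,[d,c]}\le 2\|w(H-H_I)\|_{p,I},$$
where the last step uses $a+b\le 2(a^p+b^p)^{1/p}$ on the two $L^p$-pieces of $w(H-H_I)$. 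Hence the first supremum is bounded by $2\mathcal{K}(I)$, and the symmetric argument for $\operatorname{supp} f\subseteq [b(c),b(e)]$ (now $H$ vanishes on $[d,c]$ because $b(x)\le b(c)$ for $x\le c$) gives the same bound for the second supremum. Summing and dividing by $4$ finishes the lower bound.

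I do not expect a serious technical obstacle: both halves are routine once the right test constant $K$ and the two classes of test functions are identified. The care lies in the direction of H\"older's inequality in the upper-bound step and in checking that $f$ supported away from the middle makes $H$ vanish on the appropriate half of $I$. The genuine subtlety, already flagged in the introduction, is conceptual: the test functions for the lower bound avoid the middle interval $[a(c),b(c)]$ altogether, so the lower estimate does not capture the full structure of $\bar H$ arising in the upper estimate, and this is precisely the asymmetry responsible for the "viewless zone" mentioned in the introduction.
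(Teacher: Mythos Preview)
Your proof is correct and follows essentially the same route as the paper's: the upper bound via the identity $H(x)+\bar H(x)=\int_{a(d)}^{b(e)}fv$ together with the averaging inequality $\|w(H-H_I)\|_{p,I}\le 2\|w(H-K)\|_{p,I}$, and the lower bound by testing on $f$ supported in $[a(d),a(c)]$ (resp.\ $[b(c),b(e)]$) so that $H$ vanishes on the opposite half of $I$, then controlling $|H_I|$ by H\"older. Your organisation of the lower bound---keeping both halves of $\|w(H-H_I)\|_{p,I}$ in play and combining them via $a+b\le 2(a^p+b^p)^{1/p}$---is marginally tidier than the paper's reverse-triangle-inequality version, but the content is identical.
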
\begin{proof} To prove the lower estimate we take
$f=f_a$ and write
\begin{eqnarray*}\label{P11}\mathcal{K}(I)\|f_a\|_{p}\ge\|w_d(H-H_I)\|_{p}
\ge\|w_dH\|_{p}-\|w_dH_I\|_{p}\nonumber\\=\|w_dH\|_{p}-|H_I|W_{[d,c]}^{\frac{1}{p}}.
\end{eqnarray*} In view of $\int_d^cw^p=\frac{1}{2}\int_d^ew^p$ we have
\begin{eqnarray}\label{P12}\mathcal{K}(I)\|f_a\|_{p}\ge\|w_dH\|_{p}-\frac{\int_d^c H w^p}{2^{1/p}W_{[d,c]}^{\frac{1}{p'}}},
\end{eqnarray} and, by H\"{o}lder's inequality,
\begin{eqnarray}\label{P10}\mathcal{K}(I)\|f_a\|_{p}\ge\frac{1}{2}\|w_dH\|_{p}.\end{eqnarray}
Analogously, with $f=f_b$ we can obtain the estimate
\begin{eqnarray}\label{P10'}\mathcal{K}(I)\|f\|_{p,[b(c),b(d)]}\ge\frac{1}{2}\|w_eH\|_{p},\end{eqnarray}
which yields the required lower estimate when combined with \eqref{P10}.

The upper estimate is following from the fact that
$H(x)-H_I=-[{\bar H}(x)-{\bar H}_I]$ and the inequality
$$\|w({\bar H}-{\bar H}_I)\|_{p,I}\le 2 \|w{\bar H}\|_{p,I}.$$
\end{proof}

Throughout this section we shall broadly use the fact that $\mathcal{K}(I)=\mathcal{K}(d,e)$ is continuously depending on an interval $I=(d,e)$. This follows from the above statement or from the equivalence
$$\|w(H-H_I)\|_{p,I}\approx W_I^{-\frac{1}{p}}\biggl(\int_I\int_I\bigl|H(t)-H(s)\bigr|^pw^p(s)w^p(t) \mathrm{d}s\mathrm{d}t\biggr)^{\frac{1}{p}}$$ and arguments similar to those in \cite[Lemma 5]{EEH1988}. Therefore, given $0<M\varepsilon<\mathcal{K}(\mathbb{R}^+),$ $M\in\mathbb{N},$ there exists $N\in\mathbb{N}$ and numbers $0=c_0<c_1\ldots c_{MN}<c_{MN+1}=\infty,$ $M\in\mathbb{N},$ such that $\mathcal{K}(I_n)=\varepsilon$ for $n=0,\ldots,MN-1,$ where $I_n=(c_n,c_{n+1}),$ and $\mathcal{K}(I_{MN})\le\varepsilon.$ With all these assumptions we claim the following statement.
\begin{lemma}\label{Key}
Let $1<p<\infty$ and $0<7\varepsilon<\mathcal{K}(\mathbb{R}^+).$ Suppose that there exists $N\in\mathbb{N}$ and numbers $0=c_0<c_1\ldots c_{7N}<c_{7N+1}=\infty$ such that $\mathcal{K}(I_n)=\varepsilon$ for $n=0,\ldots,7N-1,$ where $I_n=(c_n,c_{n+1}),$ and $\mathcal{K}(I_{7N})\le\varepsilon.$ Then $a_N(\mathcal{H})\ge \varepsilon/2.$
\end{lemma}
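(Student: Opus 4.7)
My plan is to carry out an Edmunds--Evans--Harris type dimension-counting argument, adapted to the Hardy--Steklov operator. Since $a_N(\mathcal{H})=\inf\{\|\mathcal{H}-K\|:\mathrm{rank}\,K<N\}$, it suffices to fix an arbitrary bounded $K$ with $\mathrm{rank}\,K<N$ and produce $f\in L^p(\mathbb{R}^+)$ with $\|(\mathcal{H}-K)f\|_p\geq(\varepsilon/2)\|f\|_p$. For each $n\in\{0,\ldots,7N-1\}$ the identity $\mathcal{K}(I_n)=\varepsilon$ yields, for any preassigned $\eta>0$, a function $f_n$ with $\|f_n\|_p=1$ and $\|w(Hf_n-(Hf_n)_{I_n})\|_{p,I_n}\geq\varepsilon-\eta$. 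Since $Hf(x)$ for $x\in I_n$ depends only on the values of $f$ on $[a(c_n),b(c_{n+1})]$, and since the portion of $f_n$ sitting in the ``core'' $[a(c_{n+1}),b(c_n)]$ contributes only a constant to $Hf_n|_{I_n}$ (which the centering absorbs), I may further arrange $\mathrm{supp}\,f_n\subseteq [a(c_n),a(c_{n+1})]\cup[b(c_n),b(c_{n+1})]$.

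The heart of the proof is a selection step: from among $\{0,\ldots,7N-1\}$ I would extract indices $n_1<\cdots<n_N$ with the properties (i) the supports $\mathrm{supp}\,f_{n_k}$ are pairwise disjoint, and (ii) for every pair $l\neq k$, $Hf_{n_l}\,\chi_{I_{n_k}}$ is a constant function. A brief case analysis shows that (ii) is equivalent to asking that $\mathrm{supp}\,f_{n_l}$ lie either entirely outside the Steklov envelope $[a(c_{n_k}),b(c_{n_k+1})]$ or entirely inside its core $[a(c_{n_k+1}),b(c_{n_k})]$. Granted such a selection, the proof concludes quickly. Since $\mathrm{rank}\,K<N=\dim V$ where $V=\mathrm{span}\{f_{n_1},\ldots,f_{n_N}\}$, there exists a non-zero $f=\sum_{k=1}^N t_k f_{n_k}\in\ker K$. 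The elementary bound $\|w(Hg-(Hg)_I)\|_{p,I}\leq 2\|wHg\|_{p,I}$ (H\"older applied to the mean) yields
\[\|\mathcal{H}f\|_p^p\geq\frac{1}{2^p}\sum_{k=1}^N\|w(Hf-(Hf)_{I_{n_k}})\|_{p,I_{n_k}}^p.\]
By (ii), on each $I_{n_k}$ only the diagonal $l=k$ term survives, contributing $|t_k|(\varepsilon-\eta)$; by (i) the resulting sum equals $(\varepsilon-\eta)^p\|f\|_p^p$. Hence $\|(\mathcal{H}-K)f\|_p=\|\mathcal{H}f\|_p\geq((\varepsilon-\eta)/2)\|f\|_p$, and letting $\eta\downarrow 0$ gives $a_N(\mathcal{H})\geq\varepsilon/2$.

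The main obstacle is precisely the selection step. For the classical Hardy operator, $Hf_n$ becomes a multiple of $w$ to the right of $\mathrm{supp}\,f_n$, making (ii) essentially automatic after a one-sided separation of indices. The two-sided Steklov kernel, however, permits $Hf_{n_l}$ to be genuinely non-constant on $I_{n_k}$ for both $n_l<n_k$ and $n_l>n_k$, so the geometry of the envelopes $[a(c_n),b(c_{n+1})]$ must be controlled on both sides. The constant $7$ in the hypothesis is the pigeonhole budget that secures $N$ well-separated indices out of $7N$ candidates, absorbing the two-sided reach of $\mathcal{H}$ together with the halving step underlying Lemma \ref{lem1}; sharpening this combinatorial accounting is, as the introduction notes, the main technical point that would be needed to upgrade the resulting estimate to a full criterion.
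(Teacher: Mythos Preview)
Your overall architecture is exactly that of the paper: restrict each near-extremizer to $[a(c_n),a(c_{n+1})]\cup[b(c_n),b(c_{n+1})]$, select $N$ indices so that the restricted functions have disjoint supports and each off-diagonal piece contributes only a constant on the selected $I_{n_k}$, kill the rank-deficient $K$ on the span, and use $\|w(F-F_I)\|_{p,I}\le 2\|w(F-c)\|_{p,I}$ to pass from $\|\mathcal{H}f\|_{p,I_{n_k}}$ to the centered quantity. Your characterization of condition (ii) is also correct.

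The genuine gap is that you have not carried out the selection step; you have only named it and asserted that $7N$ is enough budget. This is precisely where the paper spends its effort, and it is not a one-line pigeonhole. The paper first splits $\{I_n\}$ into the ``wide'' class $\mathcal{I}_1=\{n:b(c_n)\le a(c_{n+1})\}$ and the ``narrow'' class $\mathcal{I}_2=\{n:b(c_n)>a(c_{n+1})\}$. If $|\mathcal{I}_1|\ge 2N$, taking every second index in $\mathcal{I}_1$ gives $N$ intervals whose envelopes $[a(c_n),b(c_{n+1})]$ are pairwise disjoint, so both (i) and (ii) hold trivially. If $|\mathcal{I}_2|\ge 5N$, the paper introduces an auxiliary block decomposition $\Delta_k=[\xi_k,\xi_{k+1})$ with $\xi_{k+1}=a^{-1}(b(\xi_k))$, starting at the leftmost narrow interval; every narrow $I_n$ meets at most two consecutive $\Delta_k$'s. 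One then subdivides $\mathcal{I}_2$ into those $I_n$ straddling two blocks ($\mathcal{I}_{2,1}$, handled by taking every third one) and those contained in a single block ($\mathcal{I}_{2,2}$, handled by keeping only those lying in blocks of a fixed parity). In each subcase one checks directly that, for the retained indices, $\Phi(x)=\lambda_m F_m(x)+\mathrm{const}$ on $I_m$, which is exactly your (ii). The arithmetic $2N+(3N+2N)=7N$ explains the constant. Your reference to ``the halving step underlying Lemma~\ref{lem1}'' is a red herring here; Lemma~\ref{lem1} plays no role in this selection and the factor $7$ comes entirely from the $\mathcal{I}_1/\mathcal{I}_{2,1}/\mathcal{I}_{2,2}$ trichotomy.
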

\begin{proof} Let $\lambda\in(0,1).$ By assumption that $\mathcal{K}(I_n)=\varepsilon$ for $n=0,\ldots,7N-1$ there exist $f_n$ such that \begin{equation}\label{44}\|w(F_n-(F_n)_{I_n})\|_{q,I_n}> \lambda\varepsilon\|f_n\|_p,\end{equation}
where $F_n(x)=\int_{a(x)}^{b(x)}f_n(y)v(y)dy$ and the intervals $I_n$ are of two types. The first type, say $\mathcal{I}_1,$ is consisting of all $I_n,$ $n\in\mathbb{N}_1\subseteq\{1,\ldots,7N\},$ with end points $c_n$ and $c_{n+1}$ satisfying the property $b(c_n)\le a(c_{n+1}).$ The second type $\mathcal{I}_2$ is formed from all the rest $I_n$, $n\in\mathbb{N}_2\subseteq\{1,\ldots,7N\},$ $\mathbb{N}_1\cap\mathbb{N}_2=\emptyset,$ that is from those satisfying $b(c_n)> a(c_{n+1}).$

Define a dominant class $\mathcal{I}_d$ of intervals $I_n$ as follows: $$\mathcal{I}_d=\begin{cases}\mathcal{I}_1, & \mathrm{if}\ \ \ 
\sharp\{I_n\in\mathcal{I}_1\}> 2N,\\
\mathcal{I}_2, & \mathrm{if}\ \ \ \sharp\{I_n\in\mathcal{I}_2\}> 5N,\\
\textrm{either}\,\mathcal{I}_1\,\textrm{or}\,\mathcal{I}_2, & \mathrm{if}\ \ \ 
\sharp\{I_n\in\mathcal{I}_1\}=2N\,\textrm{and}\,\sharp\{I_n\in\mathcal{I}_2\}=5N.\end{cases}$$

We shall consider such a dominant class $\mathcal{I}_d$ of intervals $I_n$ and will take into account $N$ its members only, say, $I_m\in\mathcal{I}_d,$ $m=1,\ldots,N.$ Given $N$ the set of $I_m$ will be chosen depending on $\mathcal{I}_d.$

Further, chosen $\{I_m\}_{m=1}^N$ we introduce functions $$\phi_m=f_m\chi_{[a(c_m),a(c_{m+1})]}+f_m\chi_{[b(c_m),b(c_{m+1})]},\ \ \ m=1,\ldots,N,$$ and suppose that a linear operator $K$ is of rank less than $N.$ Then there exist constants $\lambda_1,\ldots,\lambda_N,$ not all zero, such that
\begin{equation}\label{K1}K\biggl(\sum_{m=1}^N\lambda_m \phi_m\biggr)=0.\end{equation} Besides, we define $\phi=\sum_{m=1}^N\lambda_m \phi_m$ and put $\Phi(x)=\int_{a(x)}^{b(x)}\phi(y)v(y)\mathrm{d}y.$

Consider first the situation when $\mathcal{I}_d=\mathcal{I}_1.$ Notice that in view of $b(c_n)\le a(c_{n+1})$ we have $\phi_n=f_n\chi_{[a(c_n),b(c_{n+1})]}.$ Moreover, the supports of $\phi_n$ with only odd serial numbers $n\in\mathbb{N}_1$ (or only even serial numbers $n\in\mathbb{N}_1$) are disjoint. Let us take as $I_m\in\mathcal{I}_d=\mathcal{I}_1$ exactly $N$ intervals $I_n\in\mathcal{I}_1$ with even serial numbers $n\in\mathbb{N}_1$. Notice that in such a case  \begin{equation*}\Phi(x)=\int_{a(x)}^{b(x)}\phi(y)v(y)\mathrm{d}y =\lambda_mF_m(x),\hspace{1cm}x\in I_m.\end{equation*}
It is known (see e.g. \cite[p. 482]{EEH1988}) that for all constants $k,$ \begin{eqnarray}\label{mark}\|w(F-F_I)\|_{p,I}\le \|w(F-k)\|_{p,I}+\|w(k-F)_I\|_{p,I}\nonumber\\
\le 2\|w(F-k)\|_{p,I}.\end{eqnarray}
Thus, by \eqref{K1}, \eqref{mark} and \eqref{44}
\begin{eqnarray*}\|\mathcal{H}\phi-K\phi\|_{p,\mathbb{R}^+}^p=\|\mathcal{H}\phi\|_{p,\mathbb{R}^+}^p
\ge\sum_{m=1}^N\|w\Phi\|_{p,I_m}^p\\=\sum_{m=1}^N|\lambda_m|^p\|wF_m\|_{p,I_m}^p
\ge\frac{1}{2^p}\sum_{m=1}^N|\lambda_m|^p\|w(F_m-(F_m)_{I_m})\|_{p,I_m}^p\\>\frac{(\lambda\varepsilon)^p}{2^p}\sum_{m=1}^N|\lambda_m|^p\|f_m\|_{p}^p
\ge\frac{(\lambda\varepsilon)^p}{2^p}\|\phi\|_{p}^p.\end{eqnarray*} Hence, we obtain the estimate
\begin{eqnarray}\label{kok}\|\mathcal{H}\phi-K\phi\|_{p,\mathbb{R}^+}^p>\frac{(\lambda\varepsilon)^p}{2^p}\|\phi\|_{p}^p,\end{eqnarray}
which shows that $a_N(\mathcal{H})\ge\lambda\varepsilon/2$ with $\lambda$ chosen arbitrary close to $1.$ Thus, $a_N(\mathcal{H})\ge\varepsilon/2.$

To consider the situation when $\mathcal{I}_d=\mathcal{I}_2$ we introduce a sequence $\{\xi_k\}_{k=0}^{K}\subset(0,\infty),$ $0<K<\infty,$ as follows. Let $I_{n_1},$ $n_1\in\mathbb{N}_2,$ be the interval of the type $\mathcal{I}_2$ which appears first from the left. Then we put
\begin{equation}\xi_0=c_{n_1},\ \ \ \xi_k=(a^{-1}\circ b)^k(c_{n_1}),\ \ \ k=0,\ldots,K,\end{equation} where $\xi_K\ge c_{7N}.$ Notice that in view of $b(c_n)> a(c_{n+1})$ we have $c_{n_1+1}<\xi_1=a^{-1}(b(c_{n_1})),$ that is $I_{n_1}\subset[\xi_0,\xi_1)=:\Delta_0.$ Moreover, by the same reason all the intervals from $\mathcal{I}_2$ have non-empty intersections with at most two neighbour segments $\Delta_k:=[\xi_k,\xi_{k+1}).$ Now we divide all the intervals of the type $\mathcal{I}_2$ into two subclasses. The first one, $\mathcal{I}_{2,1},$ is consisting of all $I_n\in\mathcal{I}_2,$ $n\in\mathbb{N}_{2,1}\subseteq\mathbb{N}_2,$ which are having non-empty intersections (of measure greater than $0$) with two neighbour segments $[\xi_k,\xi_{k+1}).$ The second subclass $\mathcal{I}_{2,2}$ is consisting of all the rest $I_n\in\mathcal{I}_2,$ $n\in\mathbb{N}_{2,2}\subseteq\mathbb{N}_2,$ which are such that $I_n\subset\Delta_{k(n)}$ for some $k(n).$ Now we determine a dominant subclass $\mathcal{I}_{2,d}$ of intervals $I_n$ from $\mathcal{I}_{2}.$ Such a subclass must be represented by not less than $3N$ intervals $I_n$ of type $\mathcal{I}_{2,1}$ or by not less than $2N$ intervals $I_n$ from $\mathcal{I}_{2,2}.$

Let $\mathcal{I}_{2,d}=\mathcal{I}_{2,1}$ first. As usual we shall take into account exactly $N$ intervals $I_m\in\mathcal{I}_{2,1},$ which have, for instance, serial numbers multiple of 3. Notice that in such a case the corresponding functions $\phi_m$ have disjoint supports. Moreover, for $x\in I_m$ it holds that
\begin{eqnarray*}\Phi(x)=\lambda_m\int_{a(x)}^{a(c_{m+1})}f_m v+
\lambda_m\int_{b(c_m)}^{b(x)}f_m v \\=\lambda_m\int_{a(x)}^{a(c_{m+1})}f_m v\pm\lambda_m\int_{a(c_{m+1})}^{b(c_m)}f_m v+\lambda_m
\int_{b(c_m)}^{b(x)}f_m v\\
=\lambda_mF_m(x)-\lambda_m\int_{a(c_{m+1})}^{b(c_m)}f_m v=:\lambda_m[F_m(x)+\mu_m].\end{eqnarray*}
Thus, similar to the case $\mathcal{I}_d=\mathcal{I}_1$
\begin{eqnarray*}\|\mathcal{H}\phi-K\phi\|_{p,\mathbb{R}^+}^p=\|\mathcal{H}\phi\|_{p,\mathbb{R}^+}^p
\ge\sum_{m=1}^N\|w\Phi\|_{p,I_m}^p\\
=\sum_{m=1}^N|\lambda_m|^p\|w(F_m-\mu_m)\|_{p,I_m}^p
\ge\frac{1}{2^p}\sum_{m=1}^N|\lambda_m|^p\|F_m-(F_m)_{I_m}\|_{p,I_m}^p\\
>\frac{(\lambda\varepsilon)^p}{2^p}\sum_{m=1}^N|\lambda_m|^p\|f_m\|_{p}^p
\ge\frac{(\lambda\varepsilon)^p}{2^p}\sum_{m=1}^N|\lambda_m|^p\|\phi_m\|_{p}^p\ge\frac{(\lambda\varepsilon)^p}{2^p}\|\phi\|_{p}^p,\end{eqnarray*}
and the required estimate $a_N(\mathcal{H})\ge\varepsilon/2$ follows.

If $\mathcal{I}_{2,d}=\mathcal{I}_{2,2}$ then there exist at least $2N$ intervals $I_n$ which are located inside of some segments $\Delta_{k(n)}$. Let us numerate the segments $\Delta_k$ with such $I_n$ inside from $1$ to some $K_0\le K.$ Notice that some of $\Delta_k$ may include more than 1 of the intervals $I_n\in\mathcal{I}_{2,2}.$ Denote $\Delta_{odd}:=\cup_{\textrm{odd}\,k}\Delta_k$ and $\Delta_{even}:=\cup_{\textrm{even}\,k}\Delta_k.$ Since $\mathcal{I}_{2,2}$ is dominating then at least one of the two sets $\Delta_{odd}$ and $\Delta_{even}$ is represented by not less than $N$ intervals $I_m\in\mathcal{I}_{2,2}.$ We shall consider such a set taking into account only $N$ its members $I_m.$
By the construction, for all $x\in I_m$
\begin{eqnarray*}\Phi(x)=\int_{a(x)}^{a(c_{m+1})}\phi v+\int_{a(c_{m+1})}^{b(c_m)}\phi v+
\int_{b(c_m)}^{b(x)}\phi v\\
=\lambda_m\int_{a(x)}^{a(c_{m+1})}f_m v+\int_{a(c_{m+1})}^{b(c_m)}\phi v+
\lambda_m\int_{b(c_m)}^{b(x)}f_m v\pm\lambda_m\int_{a(c_{m+1})}^{b(c_m)}f_m v\\
=\lambda_mF_m(x)+\int_{a(c_{m+1})}^{b(c_m)}\phi v-\lambda_m\int_{a(c_{m+1})}^{b(c_m)}f_m v=:\lambda_m[F_m(x)+\nu_m].\end{eqnarray*}

We have
\begin{eqnarray*}\|\mathcal{H}\phi-K\phi\|_{p,\mathbb{R}^+}^p=\|\mathcal{H}\phi\|_{p,\mathbb{R}^+}^p
\ge\sum_{m=1}^N\|w\Phi\|_{p,I_m}^p\\
=\sum_{m=1}^N|\lambda_m|^p\|w(F_m-\nu_m)\|_{p,I_m}^p
\ge\frac{1}{2^p}\sum_{m=1}^N|\lambda_m|^p\|F_m-(F_m)_{I_m}\|_{p,I_m}^p\\
>\frac{(\lambda\varepsilon)^p}{2^p}\sum_{m=1}^N|\lambda_m|^p\|f_m\|_{p}^p
\ge\frac{(\lambda\varepsilon)^p}{2^p}\sum_{m=1}^N|\lambda_m|^p\|\phi_m\|_{p}^p\ge\frac{(\lambda\varepsilon)^p}{2^p}\|\phi\|_{p}^p.\end{eqnarray*}
This implies the required estimate $a_N(\mathcal{H})\ge \varepsilon/2.$\end{proof}

The next lemma is giving the similar estimate for $a_n(\mathcal{H})$ from above.
\begin{lemma}\label{Key2}
Let $1<p<\infty$ and $0<\varepsilon<\|\mathcal{H}\|.$ Suppose that there exists $N\in\mathbb{N}$ and numbers $0=c_0<c_1\ldots c_{N}<c_{N+1}=\infty$ such that $\mathcal{K}(I_n)=\varepsilon$ for $n=0,\ldots,N-1,$ where $I_n=(c_n,c_{n+1}),$ and $\mathcal{K}(I_{N})\le\varepsilon.$ Then $a_{N+2}(\mathcal{H})\le 7^{1/p}\varepsilon.$
\end{lemma}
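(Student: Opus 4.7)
My plan is to construct an operator $K$ of rank at most $N+1$ with $\|\mathcal{H}-K\|\le 7^{1/p}\varepsilon,$ which by definition of $a_{N+2}$ yields the claim. The natural candidate is the local-averaging operator $K := \sum_{n=0}^{N} K_n,$ where
$$K_n f(x) := w(x)\,H_{I_n}\,\chi_{I_n}(x)$$
replaces $H$ on $I_n$ by its $w^p$-weighted mean $H_{I_n}.$ Each $K_n$ factors through $\mathbb{R}$ and hence has rank one, so $\mathrm{rank}\,K\le N+1.$ By construction
$$\|\mathcal{H}f-Kf\|_{p,\mathbb{R}^+}^p \;=\; \sum_{n=0}^{N}\|w(H-H_{I_n})\|_{p,I_n}^p,$$
so the problem reduces to bounding each summand.

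The crucial step is to localize the error. Introduce the \emph{source set}
$$E_n\;:=\;[a(c_n),a(c_{n+1})]\cup[b(c_n),b(c_{n+1})].$$
I claim that, as a functional of $f,$ the quantity $H-H_{I_n}$ on $I_n$ depends only on $f\chi_{E_n}.$ Writing $f=f\chi_{E_n}+g$ with $g:=f\chi_{E_n^{c}},$ for $x\in I_n$ one has $[a(x),b(x)]\subset[a(c_n),b(c_{n+1})],$ so the part of $g$ outside $[a(c_n),b(c_{n+1})]$ contributes nothing to $H(x).$ When $b(c_n)>a(c_{n+1})$ (a type $\mathcal{I}_2$ interval) the remaining part of $g,$ supported on $[a(c_{n+1}),b(c_n)],$ satisfies $[a(c_{n+1}),b(c_n)]\subset[a(x),b(x)]$ for every $x\in I_n,$ so it contributes the $x$-independent constant $\int_{a(c_{n+1})}^{b(c_n)} gv,$ which is killed after subtracting $H_{I_n}.$ Applying the definition of $\mathcal{K}(I_n)$ to $f\chi_{E_n}$ therefore gives
$$\|w(H-H_{I_n})\|_{p,I_n}\;\le\;\mathcal{K}(I_n)\,\|f\chi_{E_n}\|_p\;\le\;\varepsilon\,\|f\chi_{E_n}\|_p.$$

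Since $a$ and $b$ are strictly increasing, $\{[a(c_n),a(c_{n+1})]\}_{n=0}^{N}$ and $\{[b(c_n),b(c_{n+1})]\}_{n=0}^{N}$ are two families of pairwise disjoint intervals, so $\sum_n\chi_{E_n}\le 2$ pointwise on $\mathbb{R}^+.$ Summing the displayed bound over $n$ produces
$$\|\mathcal{H}f-Kf\|_p^p\;\le\;\varepsilon^p\sum_{n=0}^{N}\|f\chi_{E_n}\|_p^p\;\le\;2\varepsilon^p\|f\|_p^p\;\le\;7\varepsilon^p\|f\|_p^p,$$
which completes the argument. The main obstacle is the localization claim: one must verify carefully that $f\chi_{E_n^{c}}$ really yields an $x$-constant contribution to $H$ on $I_n,$ and this requires the $\mathcal{I}_1/\mathcal{I}_2$ case split already visible in the proof of Lemma \ref{Key}. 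Once this is in place the remainder is a routine overlap count for the two disjoint families, and the crude factor $7$ leaves ample slack for any small loss along the way.
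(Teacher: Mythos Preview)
Your proof is correct and in fact cleaner than the paper's. Both arguments start from the same rank-$(N{+}1)$ operator $K=\sum_n K_{I_n}$ and the identity $\|\mathcal{H}f-Kf\|_p^p=\sum_n\|w(H-H_{I_n})\|_{p,I_n}^p$. From there, however, the paper splits the intervals $I_n$ into three classes $\mathcal{I}_1,\mathcal{I}_{2,1},\mathcal{I}_{2,2}$, brings in the auxiliary segments $\Delta_k$ from Lemma~\ref{Key}, and runs a separate overlap argument in each class, obtaining the constants $\gamma_1+\gamma_2+\gamma_3=2+3+2=7$. Your key observation is that the localization to $E_n=[a(c_n),a(c_{n+1})]\cup[b(c_n),b(c_{n+1})]$ --- which the paper records as \eqref{kuku} but exploits only for $\mathcal{I}_{2,2}$ --- in fact holds uniformly for every $I_n$ (in the $\mathcal{I}_1$ case $E_n$ simply equals $[a(c_n),b(c_{n+1})]$, so the complement is empty). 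Once this is in place, the two families $\{[a(c_n),a(c_{n+1})]\}_n$ and $\{[b(c_n),b(c_{n+1})]\}_n$ are each pairwise disjoint, giving $\sum_n\chi_{E_n}\le 2$ and hence the constant $2$ in place of $7$. So you recover the paper's conclusion with a sharper bound $a_{N+2}(\mathcal{H})\le 2^{1/p}\varepsilon$ and without the $\Delta_k$ machinery; the price is only that one must check the localization claim carefully in both the $\mathcal{I}_1$ and $\mathcal{I}_2$ cases, which you do.
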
\begin{proof}
First of all notice that in view of properties of the operator $H$ \begin{equation}\label{kuku'}\mathcal{K}(I_n)=
\sup_{f:\,{\rm supp} f\subseteq [a(c_n),b(c_{n+1})]}\frac{\|w(H-H_{I_n})\|_{q,I_n}}{\|f\|_{p}}.\end{equation}
Moreover, since $\mathcal{K}(I)=\mathcal{{\bar K}}(I):=\sup_{f\not=0}\frac{\|w({\bar H}-{\bar H}_I)\|_{p,I}}{\|f\|_{p}}$
with $${\bar H}(x)=\int_{a(c_n)}^{a(x)}fv+\int_{b(x)}^{b(c_{n+1})}fv,$$ we have
\begin{equation}\label{kuku}\mathcal{K}(I_n)=
\sup_{f:\,{\rm supp} f\subseteq \bigl\{[a(c_n),a(c_{n+1})]\cup [b(c_n),b(c_{n+1})]\bigr\}}\frac{\|w({\bar H}-{\bar H}_I)\|_{q,I_n}}{\|f\|_{p}}.\end{equation}

Let $f\in L^p$ be such that $\|f\|_{p,\mathbb{R}^+}=1.$ We define $$Kf=\sum_{n=0}^N K_{I_n}f,\hspace{1cm}K_{I}f(x)=\chi_{I}(x)w(x)H_I.$$
Then $K$ is bounded on $L^p$ and has rank not grater than $N+1.$ We write
\begin{eqnarray*}
\|\mathcal{H}f-Kf\|_{p}^p=\sum_{n=0}^N\|\mathcal{H}f-K_{I_n}f\|_{p, I_n}^p=
\sum_{n=0}^N\|w(H-H_{I_n})\|_{p, I_n}^p.\end{eqnarray*}
Now divide all the intervals $I_n,$ $0\le n\le N,$ into three groups $\mathcal{I}_1,$ $\mathcal{I}_{2,1}$ and
$\mathcal{I}_{2,2}$ as in the proof of Lemma \ref{Key}. Namely, let
$$\mathcal{I}_1=\biggl\{I_n,\,n\in\mathbb{N}_1\subseteq\bigl\{0,\ldots,N\bigr\}\colon b(c_n)\le a(c_{n+1})\biggr\},$$
\begin{eqnarray*}\mathcal{I}_{2,1}=\biggl\{I_n,\,n\in\mathbb{N}_{2,1}\subseteq\bigl\{0,\ldots,N\bigr\}\colon b(c_n)> a(c_{n+1})\,{\rm and}\\
{\rm there}\,{\rm exists}\,{\rm two}\,{\rm neighbour}\,\Delta_k\,{\rm such}\,{\rm that}\,
{\rm meas}\{I_n\cap\Delta_k\}\not=\emptyset\biggr\},
\end{eqnarray*}
\begin{eqnarray*}\mathcal{I}_{2,2}=\biggl\{I_n,\,n\in\mathbb{N}_{2,2}\subseteq\bigl\{0,\ldots,N\bigr\}\colon b(c_n)> a(c_{n+1})\,{\rm and}\\
{\rm there}\,{\rm exists}\,{\rm the}\,{\rm only}\,\Delta_k\,{\rm such}\,{\rm that}\,
{\rm meas}\{I_n\cap\Delta_k\}\not=\emptyset\biggr\}.\end{eqnarray*} Notice that $\mathbb{N}_{1}\cap\mathbb{N}_{2,1}\cap\mathbb{N}_{2,2}=\emptyset.$ First consider the class $\mathcal{I}_1$. Assume that $n\in\mathbb{N}_1$ counts only odd (or only even) serial numbers from $\mathbb{N}_1.$
Then, in view of \eqref{kuku'}, the set consists of those $I_n$ whose norms $\mathcal{K}(I_n)$ are achieved on functions with non-overlapping supports. Thus, by \eqref{kuku'} \begin{eqnarray*}\sum_{n\in\mathbb{N}_1}\|w(H-H_{I_n})\|_{p, I_n}^p=
\sum_{{\rm odd}\,n\in\mathbb{N}_1}\|w(H-H_{I_n})\|_{p, I_n}^p\\+\sum_{{\rm even}\,n\in\mathbb{N}_1}\|w(H-H_{I_n})\|_{p, I_n}^p
\le2\sum_{n\in\mathbb{N}_1}\mathcal{K}^p(I_n)\|f\|_{p}^p\\\le2\varepsilon^p\sum_{n\in\mathbb{N}_1}\|f\|_{p}^p
\le2\varepsilon^p\|f\|_{p,\mathbb{R}^+}^p=\gamma_1\varepsilon^p,\hspace{5mm}\gamma_1=2.\end{eqnarray*}

Properties of the class $\mathcal{I}_{2,1}$ (see proof of Lemma \ref{Key}) and \eqref{kuku'} allow us to obtain similar estimate for $I_n$ with $n\in\mathbb{N}_{2,1}:$
 \begin{eqnarray*}\sum_{n\in\mathbb{N}_{2,1}}\|w(H-H_{I_n})\|_{p, I_n}^p\le\gamma_2\varepsilon^p,\hspace{5mm}\gamma_2=3.\end{eqnarray*}
Finally, taking into account \eqref{kuku} we can give similar estimate for all $I_n$ from the class $\mathcal{I}_{2,2}:$
 \begin{eqnarray*}\sum_{n\in\mathbb{N}_{2,2}}\|w(H-H_{I_n})\|_{p, I_n}^p\le\gamma_3\varepsilon^p,\hspace{5mm}\gamma_3=2.\end{eqnarray*}
In summary, we have \begin{eqnarray*}
\|\mathcal{H}f-Kf\|_{p}^p\le \sum_{i=1,2,3}\gamma_i\varepsilon^p=7\varepsilon^p,\end{eqnarray*} which yields the required upper estimate
$a_{N+2}(\mathcal{H})\le 7^{1/p}\varepsilon.$\end{proof}

\subsection{Denotations and technical lemmas}

Given boundaries $a(x),b(x)$ put a sequence $\{\xi_k\}_{k\in\mathbb{Z}}\subset(0,\infty)$ such that
\begin{equation}\label{se}\xi_0=1,\ \ \ \xi_k=(a^{-1}\circ b)^k(1),\ \ \ k\in\mathbb{Z},\end{equation} and denote
\begin{eqnarray*}
\nu_{k}:=\sup_{t\in(\xi_k,\xi_{k+1})}\biggl(\int_{b^{-1}(\sigma(t))}^{a^{-1}(\sigma(t))}w^p(x) \mathrm{d}x\biggr)^{\frac{1}{p}} \biggl(\int_{a(t)}^{b(t)}v^{p'}(y)\mathrm{d}y\biggr)^{\frac{1}{p'}},\\
{\bar \nu}_{k}:=\sup_{t\in(\xi_k,\xi_{k+1})}\biggl(\int_{b^{-1}(\sigma(t))}^{a^{-1}(\sigma(t))} w^p(x)\chi_{[\xi_k,\xi_{k+1}]}(x)\mathrm{d}x\biggr)^{\frac{1}{p}} \biggl(\int_{a(t)}^{b(t)}v^{p'}(y)\mathrm{d}y\biggr)^{\frac{1}{p'}},\\
{\tilde \nu}_{k}:=\biggl(\int_{\xi_k}^{\xi_{k+1}}w^p(x)\mathrm{d}x\biggr)^{\frac{1}{p}} \biggl(\int_{a(\sigma^{-1}(b(\xi_k)))}^{b(\sigma^{-1}(a(\xi_{k+1})))}v^{p'}(y) \mathrm{d}y\biggr)^{\frac{1}{p'}},\end{eqnarray*} where $\sigma(t)$ is a fairway-function satisfying $a(t)<\sigma(t)<b(t),$ $t>0$ and
\begin{equation}\label{fairway}
\int_{a(t)}^{\sigma(t)}v^{p'}(y)\mathrm{d}y=\int_{\sigma(t)}^{b(t)}v^{p'}(y)\mathrm{d}y,\ \ \ t>0,
\end{equation} (see \cite[Definition 2.4]{SU} for details).
Further, for any $k\in\mathbb{Z}$ we introduce a sequence $\{x_j\},$ $-j_a(k)\le j(k)\le j_b(k)-1,$ analogous to that in \cite[Lemmas 2.7, 2.8]{SU}:
\begin{enumerate}\label{sequ}
\item $x_{-j_a(k)}=\xi_k,$ $x_0=\sigma^{-1}(b(\xi_k))=\sigma^{-1}(a(\xi_{k+1})),$ $x_{j_b(k)}=\xi_{k+1};$
\item if $\sigma^{-1}(a(x_0))\le \xi_k$ then $j_a(k)=1;$
\item if $\sigma^{-1}(b(x_0))\ge \xi_{k+1}$ then $j_b(k)=1;$
\item if $\sigma^{-1}(a(x_0))> \xi_k$ then $j_a(k)>1$ and $x_{j(k)-1}=\sigma^{-1}(a(x_{j(k)}))$ for $x_{j(k)-1}>\xi_k,$ $j_a(k)+2\le j(k)\le 0;$
\item if $\sigma^{-1}(b(x_0))> \xi_{k+1}$ then $j_b(k)>1$ and $x_{j(k)+1}=\sigma^{-1}(b(x_{j(k)}))$ for $x_{j(k)+1}<\xi_{k+1},$ $0\le j(k)\le j_b(k)-2.$
\end{enumerate}

Further, define $m=\sum_{l<k}\sum_{i=-j_a}^{j_b-1}i(l)+\sum_{-j_a\le i\le j(k)<j_b}i(k)$ and put
\begin{eqnarray*}
\mu_{m}:=\biggl(\int_{x_m}^{x_{m+1}}w^p(x)\mathrm{d}x\biggr)^{\frac{1}{p}} \biggl(\int_{a(x_m)}^{b(x_{m+1})}v^{p'}(y)\mathrm{d}y \biggr)^{\frac{1}{p'}}\\
=\biggl(\int_{x_{j(k)}}^{x_{j(k)+1}}w^p(x)\mathrm{d}x\biggr)^{\frac{1}{p}} \biggl(\int_{a(x_{j(k)})}^{b(x_{j(k)+1})}v^{p'}(y)\mathrm{d}y
\biggr)^{\frac{1}{p'}}.\end{eqnarray*}

It follows from \cite[Theorem 4.1]{SU} that \begin{eqnarray}\label{HK}\beta_p{\tilde \nu}_{k}\le \beta_p{\bar \nu}_{k}\le\beta_p\sup_{k\in\mathbb{Z}}\nu_k\le\|\mathcal{H}\|_{L^p\to L^p}\le \gamma_p\sup_{m\in\mathbb{Z}}\mu_m.\end{eqnarray}
We establish our result in terms of sequences $\mu_m$ and $\nu_k.$ Meanwhile, in view of ${\tilde \nu}_{k}\le{\bar \nu}_{k}\le\nu_k$ two other sequences ${\tilde \nu}_{k}$ and ${\bar \nu}_{k}$ can be taken instead of $\nu_k$ for checking necessary condition for $\mathcal{H}$ to be in $\mathbf{S}_\alpha,$ $\alpha>0.$ Notice that the sequence ${\tilde \nu}_{k}$ is consisting of the smallest elements and being the most convenient for calculation.

We shall need the three statements below to prove our lower estimate.
\begin{lemma}\label{prev}
Let $c\in I$ be chosen so that $W_{[d,c]}=W_I/2.$ Suppose that $0<\varepsilon<\|\mathcal{H}\|$ and assume that $\sharp S_I(4\varepsilon/\beta_p)\ge 4,$ where
$$S_I(\varepsilon):=\{k\in\mathbb{Z}\colon {\bar \Delta}_k\subset I,\ \nu_{k}>\varepsilon\}.$$ Then $\mathcal{K}(I)>\varepsilon.$
\end{lemma}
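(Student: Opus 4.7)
My plan is to combine a pigeonhole argument on the location of the intervals $\bar\Delta_k$ relative to $c$ with the test-function construction underlying the lower bound $\beta_p\nu_k\le\|\mathcal{H}\|$ in~\eqref{HK}, and then to invoke the lower estimate of Lemma~\ref{lem1}.

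Since the $\bar\Delta_k=[\xi_k,\xi_{k+1}]$ are essentially disjoint, at most one of them can contain $c$ strictly in its interior. Hence among the $\ge 4$ indices in $S_I(4\varepsilon/\beta_p)$, at least three satisfy $\bar\Delta_k\subset[d,c]$ or $\bar\Delta_k\subset[c,e]$, and by pigeonhole two of them lie on the same side of $c$. Without loss of generality, $k_1<k_2$ with $\bar\Delta_{k_1},\bar\Delta_{k_2}\subset[d,c]$.

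I next pick $t_1\in(\xi_{k_1},\xi_{k_1+1})$ so that the expression in the supremum defining $\nu_{k_1}$ strictly exceeds $4\varepsilon/\beta_p$, and set $f:=v^{p'-1}\chi_{[a(t_1),b(t_1)]}$, for which $\|f\|_p^p=\int_{a(t_1)}^{b(t_1)}v^{p'}$. The key geometric observation, using $k_2\ge k_1+1$ and $\bar\Delta_{k_2}\subset[d,c]$, is that $b(t_1)<b(\xi_{k_1+1})=a(\xi_{k_1+2})\le a(\xi_{k_2+1})\le a(c)$, and analogously $a^{-1}(\sigma(t_1))<\xi_{k_1+2}\le c$. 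Thus $\operatorname{supp} f\subset(a(d),a(c))$ as required by the first term of Lemma~\ref{lem1}, and the right endpoint of $J_1:=[b^{-1}(\sigma(t_1)),a^{-1}(\sigma(t_1))]$ already lies in $[d,c]$. A direct computation exploiting the fairway property of $\sigma(t_1)$ gives $H(x)\ge\tfrac12\|f\|_p^p$ for every $x\in J_1$; under the further assumption $J_1\subset[d,c]$ this produces
\[
\|w_d H\|_p\ge\tfrac12\|f\|_p^p\Bigl(\int_{J_1}w^p\Bigr)^{1/p}=\tfrac12\nu_{k_1}(t_1)\,\|f\|_p,
\]
so that the first term of Lemma~\ref{lem1} yields $\mathcal{K}(I)\ge\tfrac12\|w_d H\|_p/\|f\|_p>\varepsilon/\beta_p\ge\varepsilon$.

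The main obstacle is the case $b^{-1}(\sigma(t_1))<d$, in which the left tail of $J_1$ escapes $I$ and $\int_{J_1\cap[d,c]}w^p$ may be strictly smaller than $\int_{J_1}w^p$. I handle this by a dichotomy on which half of $J_1$, split at $t_1$, carries the majority of $\int_{J_1}w^p$. If the right half $[t_1,a^{-1}(\sigma(t_1))]$ dominates, the right-supported function $v^{p'-1}\chi_{[\sigma(t_1),b(t_1)]}$ obeys the same support constraint and plateaus $H$ on an interval that sits automatically inside $[d,c]$, giving the desired bound. If the left half dominates, I rerun the argument with $k_2$ in place of $k_1$, exploiting that $b^{-1}(\sigma(t_2))\ge\xi_{k_2-1}\ge\xi_{k_1}\ge d$, so that the left-supported plateau is in $[d,c]$ automatically. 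The $2^{1/p}$-losses of the half-sided construction, together with the factor $\tfrac12$ from Lemma~\ref{lem1}, are exactly what the constant $4$ in the hypothesis $\nu_k>4\varepsilon/\beta_p$ is tuned to absorb.
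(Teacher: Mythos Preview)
Your pigeonhole at the single point $c$ (retaining two $\bar\Delta_k$'s on one side) differs from the paper's proof, which instead removes the wider ``viewless zone'' $[b^{-1}(a(c)),a^{-1}(b(c))]$; since that zone spans exactly two $\Delta$-steps, at most three of the $\bar\Delta_k$ can meet it, and with four in $S_I$ one must lie entirely in $I_d=[d,b^{-1}(a(c))]$ or $I_e=[a^{-1}(b(c)),e]$. For such $k$ the paper simply combines Lemma~\ref{lem1} with \eqref{HK} to get $\mathcal K(I)\ge\tfrac{\beta_p}{4}\,\nu_k>\varepsilon$ in one line. Your hands-on test-function route is more self-contained and is sound in the main case $J_1\subset[d,c]$, but the boundary-case dichotomy has a genuine gap.

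When the left half of $J_1$ dominates you pass to $k_2$ with the left-supported function $v^{p'-1}\chi_{[a(t_2),\sigma(t_2)]}$, noting that the plateau $[b^{-1}(\sigma(t_2)),t_2]$ sits in $[d,c]$. It does, but the resulting lower bound for $\|w_dH\|_p/\|f\|_p$ carries the factor $\bigl(\int_{b^{-1}(\sigma(t_2))}^{t_2}w^p\bigr)^{1/p}$, and nothing forces the left half of $J_2$ to hold any fixed fraction of $\int_{J_2}w^p$: your dichotomy was on $J_1$, not on $J_2$. The right-half alternative for $k_2$ is equally blocked, since $a^{-1}(\sigma(t_2))<\xi_{k_2+2}$ need not be $\le c$ (you only know $\xi_{k_2+1}\le c$); the same obstruction hits the support check for the left-supported $f$, because $t_2\le c$ gives only $\sigma(t_2)<\sigma(c)$ with $\sigma(c)>a(c)$, not $\sigma(t_2)\le a(c)$. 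So the ``$2^{1/p}$-loss'' you budget for is not the loss that actually occurs, and the argument does not close with only two intervals on one side. A smaller point: your final step $\varepsilon/\beta_p\ge\varepsilon$ presupposes $\beta_p\le1$, which is nowhere asserted; the paper sidesteps this by retaining $\beta_p$ as the constant in \eqref{HK}, so that $\tfrac{\beta_p}{4}\cdot\tfrac{4\varepsilon}{\beta_p}=\varepsilon$ on the nose.
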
 \begin{proof}
Since $\sharp S_I(4\varepsilon/\beta_p)\ge 4$ then at least one of the intervals $I_d:=[d, b^{-1}(a(c))],$ $I_e:=[a^{-1}(b(c)),e],$ say $I_d,$ contains not less than one member of $S_I(4\varepsilon/\beta_p)$ not hidden by "viewless zone" $[b^{-1}(a(c)),a^{-1}(b(c))].$ Therefore, by Lemma \ref{lem1} and \eqref{HK} \begin{eqnarray*}\mathcal{K}(I)\ge \frac{1}{4}\biggl[
\sup_{f\colon {\rm supp}\,f\subseteq[a(d),a(c)]}\frac{\|w_dH\|_{p}}{\|f\|_p}+
\sup_{f\colon {\rm supp}\,f\subseteq[b(c),b(d)]}\frac{\|w_eH\|_{p}}{\|f\|_p}\biggr]\\
\ge \frac{\beta_p}{4}\nu_{k}>\varepsilon.
\end{eqnarray*}
\end{proof}

\begin{lemma}\label{prev'}
Let $\varepsilon >0$ and $N=N(\varepsilon)$ be the length of the sequence $\{c_n\}_{n=0}^{7N+1}$ from Lemma \ref{Key} with $c_0=0$ and $c_{7N+1}=\infty.$ Then
$$\sharp\{k\in\mathbb{Z}\colon \nu_k>4\varepsilon/\beta_p\}\le 28N(\varepsilon).$$ \end{lemma}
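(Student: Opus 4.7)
The idea is a counting argument based on the contrapositive of Lemma~\ref{prev}. By the hypotheses of Lemma~\ref{Key}, every interval $I_n=(c_n,c_{n+1})$ in the partition satisfies $\mathcal{K}(I_n)\le\varepsilon$, so Lemma~\ref{prev} forbids $\sharp S_{I_n}(4\varepsilon/\beta_p)\ge 4$; that is,
\begin{equation*}
\sharp S_{I_n}(4\varepsilon/\beta_p)=\sharp\{k\in\mathbb{Z}\colon \bar\Delta_k\subset I_n,\ \nu_k>4\varepsilon/\beta_p\}\le 3
\end{equation*}
for every $n\in\{0,1,\ldots,7N\}$.

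I would then split the set $\{k\in\mathbb{Z}\colon\nu_k>4\varepsilon/\beta_p\}$ into two disjoint families. The first family gathers those indices $k$ whose interval $\bar\Delta_k$ is contained in some single $I_n$. Summing the bound above over the $7N+1$ intervals of the partition, the size of this family does not exceed $3(7N+1)$. The second family consists of the remaining $k$; for each such $k$, the interval $\bar\Delta_k$ must meet at least one of the $7N$ interior partition points $c_1,\ldots,c_{7N}$.

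The second family is controlled by the geometry of the sequence $\{\xi_k\}$ from \eqref{se}. Since $\bar\Delta_k$ is an enlargement of the essentially disjoint intervals $\Delta_k=[\xi_k,\xi_{k+1}]$ with bounded overlap multiplicity, each boundary point $c_n$ lies in only a uniformly bounded number of the $\bar\Delta_k$. This yields a bound of the form $C\cdot 7N$ for the second family, and adding the two contributions produces the required estimate $\sharp\{k\colon\nu_k>4\varepsilon/\beta_p\}\le 28N(\varepsilon)$, after absorbing the $O(1)$ additive term coming from the exceptional endpoint interval $I_{7N}$.

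The main obstacle I expect is the second step: a precise quantitative description of the overlap between consecutive $\bar\Delta_k$ is needed in order to pin the constant in front of $N$ down to the target value $28$, rather than a larger constant that would follow from a careless estimate on the multiplicity. The first step, by contrast, is immediate from the contrapositive reading of Lemma~\ref{prev}.
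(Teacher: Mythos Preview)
Your plan is exactly the paper's argument: split the indices $k$ with $\nu_k>4\varepsilon/\beta_p$ into those whose $\bar\Delta_k$ lies in a single $I_n$ (at most $3$ per $I_n$ by the contrapositive of Lemma~\ref{prev}) and those whose $\bar\Delta_k$ contains some interior node $c_n$. Your worry about the overlap constant in the second family is misplaced, however: $\bar\Delta_k$ is simply the closure $[\xi_k,\xi_{k+1}]$ of $\Delta_k$, not a genuine enlargement, and these intervals tile $(0,\infty)$, overlapping only at endpoints. Hence each of the $7N$ interior points $c_1,\dots,c_{7N}$ lies in at most one $\bar\Delta_k$, so the second family has at most $7N$ members and the count $3\cdot 7N+7N=28N$ follows immediately (the $O(1)$ slack from the extra interval $I_{7N}$ is absorbed, as you noted).
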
\begin{proof}
We have $$\sharp\{k\in\mathbb{Z}\colon c_n\in{\bar \Delta}_k\ \textrm{for}\ \textrm{some}\ n,\,1\le n\le 7N\}\le 7N.$$
For every $k\in\mathbb{Z},$ which is not included in the above set, ${\bar \Delta}_k\subset I_n$ for some $n,$ $0\le n\le 7N-1.$ Then by Lemma \ref{prev}
$$\sharp\{k\in\mathbb{Z}\colon \nu_k>4\varepsilon/\beta_p\}\le 3.$$ Thus,
\begin{eqnarray*}\sharp\{k\in\mathbb{Z}\colon \nu_k>4\varepsilon/\beta_p\}\le
\sum_{k=0}^{7N-1}\sharp\{k\in\mathbb{Z}\colon {\bar \Delta}_k\subseteq I_n,\,\nu_k>4\varepsilon/\beta_p\}\\+7N\le
3\cdot 7N+7N=28N.\end{eqnarray*}
\end{proof}

\begin{lemma}\label{KK} We have for all $t>0:$
$$\sharp\{k\in\mathbb{Z}\colon \nu_{k}>t\}\le 28 \sharp\{n\in\mathbb{N}\colon a_n(\mathcal{H})\ge \beta_pt/8\}.$$
\end{lemma}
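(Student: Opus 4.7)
The plan is to chain together Lemmas \ref{prev'} and \ref{Key} via the correspondence $\varepsilon = \beta_p t / 4$. This choice makes the left-hand side of the assertion literally equal to the quantity $\sharp\{k \in \mathbb{Z} \colon \nu_k > 4\varepsilon/\beta_p\}$ that is bounded in Lemma \ref{prev'}, while the index $N = N(\varepsilon)$ appearing on the right there is simultaneously (by Lemma \ref{Key}) a level at which the approximation numbers have not yet dropped below $\varepsilon/2 = \beta_p t/8$. A monotonicity count then closes the loop.

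In detail I would fix $t > 0$ and set $\varepsilon := \beta_p t / 4$. Assuming $7\varepsilon < \mathcal{K}(\mathbb{R}^+)$, the continuous dependence of $\mathcal{K}(d,e)$ on the interval (remarked just before Lemma \ref{Key}) produces $N \in \mathbb{N}$ and points $0 = c_0 < c_1 < \cdots < c_{7N+1} = \infty$ with $\mathcal{K}(I_n) = \varepsilon$ for $0 \le n \le 7N-1$ and $\mathcal{K}(I_{7N}) \le \varepsilon$. This partition is exactly what both of the earlier lemmas need. Applying them in turn yields
\begin{equation*}
a_N(\mathcal{H}) \ge \varepsilon/2 = \beta_p t / 8 \qquad \text{and} \qquad \sharp\{k \in \mathbb{Z} \colon \nu_k > t\} \le 28 N.
\end{equation*}

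Since the approximation numbers $a_n(\mathcal{H})$ are monotone nonincreasing in $n$, the first bound forces $a_n(\mathcal{H}) \ge \beta_p t/8$ for every $n \le N$, so $N \le \sharp\{n \in \mathbb{N} \colon a_n(\mathcal{H}) \ge \beta_p t / 8\}$. Substituting into the second bound gives the claim. The only remaining point is the degenerate case $7\varepsilon \ge \mathcal{K}(\mathbb{R}^+)$, i.e.\ $t$ large: by \eqref{HK} one has $\nu_k \le \beta_p^{-1}\|\mathcal{H}\|$ uniformly in $k$, so for $t$ exceeding this universal threshold the left-hand side is empty and the inequality is trivial. I expect no genuine obstacle here; the work is essentially the accounting that matches the factor $7$ in Lemma \ref{Key} with the factor $28 = 4\cdot 7$ in Lemma \ref{prev'}, together with the standard fact that $\{a_n\}$ is monotone.
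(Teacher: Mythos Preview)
Your argument is correct and follows exactly the route of the paper: set $\varepsilon=\beta_p t/4$, invoke Lemma~\ref{prev'} to bound the left-hand side by $28N(\varepsilon)$, and invoke Lemma~\ref{Key} (together with the monotonicity of $a_n$) to bound $N(\varepsilon)$ by the right-hand side. The paper's proof is in fact terser than yours---it does not spell out the monotonicity step or discuss the degenerate range $7\varepsilon\ge\mathcal{K}(\mathbb{R}^+)$---so there is nothing to add.
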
\begin{proof} By Lemma \ref{Key}
$$\sharp\{n\in\mathbb{N}\colon a_n(\mathcal{H})\ge 2^{-1}\varepsilon\}\ge N(\varepsilon).$$ Therefore, by Lemma \ref{prev'} 
\begin{eqnarray*}
\sharp\{k\in\mathbb{Z}\colon \nu_k>t\}\le 28 N(\beta_pt/4)
\le 28\sharp\{n\in\mathbb{N}\colon a_n(\mathcal{H})\ge \beta_pt/8\}.\end{eqnarray*}
\end{proof}

The next lemma will help us to estimate $\|\mathcal{H}\|_{\mathbf{S}_\alpha}$ from above.

\begin{lemma}\label{KKK} Let $\delta_m=(x_m,x_{m+1}),$ $I_n=(c_n,c_{n+1})$ and $x_m<c_1<c_2<\ldots<c_{l-1}<x_{m+1},$ $l>1.$ Then
$$\sum_{i=1}^{l-1}\biggl(\int_{x_m}^{x_{m+1}}\!\!\!w^p\chi_{I_i}\biggr)^{\frac{1}{p}} \biggl(\int_{a(x_m)}^{b(x_{m+1})}\!\!\!v^{p'}\bigl[\chi_{[a(c_i),a(c_{i+1})]}+\chi_{[b(c_i),b(c_{i+1})]}\bigr]\biggr)^{\frac{1}{p'}} \!\!\!\le\mu_m .$$
\end{lemma}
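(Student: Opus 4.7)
The plan is to apply the discrete H\"older inequality to split the sum into products of $\ell^p$ and $\ell^{p'}$ norms, exploit disjointness of the $I_i$'s on the $w^p$-side and of the $a$- and $b$-intervals on the $v^{p'}$-side, and finally use the geometric fact $a(x_{m+1})\le b(x_m)$ guaranteed by the fairway construction of $\{x_j\}$ to merge the two pieces into a single integral over $[a(x_m),b(x_{m+1})]$.

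In detail: write $A_i=\int_{x_m}^{x_{m+1}}w^p\chi_{I_i}$ and let $B_i$ denote the second factor in each summand. By H\"older with exponents $p,p'$ the sum is at most $(\sum_i A_i)^{1/p}(\sum_i B_i)^{1/p'}$. The first sum is at most $\int_{x_m}^{x_{m+1}}w^p$ because the $I_i=(c_i,c_{i+1})$ are pairwise disjoint subintervals of $(x_m,x_{m+1}]$. For the second, strict monotonicity of $a$ (resp.\ $b$) makes the intervals $\{[a(c_i),a(c_{i+1})]\}$ (resp.\ $\{[b(c_i),b(c_{i+1})]\}$) pairwise disjoint and, after truncation by the outer integration range, contained in $[a(x_m),a(x_{m+1})]$ (resp.\ $[b(x_m),b(x_{m+1})]$), giving $\sum_i B_i\le\int_{a(x_m)}^{a(x_{m+1})}v^{p'}+\int_{b(x_m)}^{b(x_{m+1})}v^{p'}$.

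The decisive step is to check that these two subintervals meet on a set of measure zero, i.e.\ $a(x_{m+1})\le b(x_m)$. This is exactly what the fairway construction of $\{x_j\}$ enforces: for $j\ge 0$ the recursion $x_{j+1}=\sigma^{-1}(b(x_j))$ gives $b(x_j)=\sigma(x_{j+1})>a(x_{j+1})$, and for $j\le 0$ the recursion $x_{j-1}=\sigma^{-1}(a(x_j))$ gives $a(x_j)=\sigma(x_{j-1})<b(x_{j-1})$; the boundary pairs in which $x_m=\xi_k$ or $x_{m+1}=\xi_{k+1}$ reduce to one of these identities combined with the defining inequality $a<\sigma<b$. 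With this disjointness in hand, the sum of the two $v^{p'}$-integrals is at most $\int_{a(x_m)}^{b(x_{m+1})}v^{p'}$, and combining with the $w^p$-bound via H\"older yields precisely $\mu_m$.

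The main obstacle I anticipate is a careful case analysis for the geometric disjointness step, particularly handling the $\xi_k$-boundary pairs where one or both of $x_m,x_{m+1}$ coincides with a $\xi_k$; once that is settled, the remaining steps are a routine H\"older inequality plus monotonicity argument.
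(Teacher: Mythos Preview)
Your approach is correct and coincides with the paper's, whose entire proof is the single line ``The statement is following by H\"older's inequality.'' You have filled in the details the paper omits---the discrete H\"older step, disjointness of the $I_i$ and of the $a$- and $b$-images, and the fairway inequality $a(x_{m+1})\le b(x_m)$---though note that your containment of the $a$-intervals in $[a(x_m),a(x_{m+1})]$ actually rests on $c_l\le x_{m+1}$ (implicit from the intended application in Theorem~\ref{notmain}, where each $I_i\subseteq\delta_m$) rather than on truncation by the larger outer range $[a(x_m),b(x_{m+1})]$.
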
\begin{proof} The statement is following by H\"{o}lder's inequality.
\end{proof}

\subsection{The lower and upper estimates for the Schatten norms} \begin{theorem}\label{main'} For all $\alpha>0,$
$$\sum_{k\in\mathbb{Z}}\nu_k^\alpha\le28\biggl(\frac{8}{\beta_p}\biggr)^\alpha \sum_{n\in\mathbb{N}}a_n^\alpha(\mathcal{H}).$$
\end{theorem}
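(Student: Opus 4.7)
The proof plan is to derive the estimate from Lemma \ref{KK} by applying the layer-cake (distribution function) representation to both the left- and right-hand sides, and then performing a single change of variables. All the analytic content already sits inside Lemma \ref{KK}; the work in this theorem is purely to translate a counting-function inequality into an $\ell^\alpha$ inequality.

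First, I would recall the standard identity that for any nonnegative sequence $\{x_k\}$ and any $\alpha>0$,
$$\sum_k x_k^\alpha \;=\; \alpha\int_0^\infty t^{\alpha-1}\,\sharp\{k\colon x_k>t\}\,\mathrm{d}t,$$
which follows from Fubini (equivalently, layer-cake with counting measure). I would apply this with $x_k=\nu_k$ on the left, and separately with $x_n=a_n(\mathcal{H})$ on the right.

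Second, I would insert Lemma \ref{KK} inside the integral:
$$\sum_{k\in\mathbb{Z}}\nu_k^\alpha \;=\; \alpha\int_0^\infty t^{\alpha-1}\,\sharp\{k\colon\nu_k>t\}\,\mathrm{d}t \;\le\; 28\alpha\int_0^\infty t^{\alpha-1}\,\sharp\{n\colon a_n(\mathcal{H})\ge\beta_p t/8\}\,\mathrm{d}t.$$
Note that the $\ge$ versus $>$ distinction is harmless after integrating in $t$. Third, I would substitute $s=\beta_p t/8$, so that $t=8s/\beta_p$ and $\mathrm{d}t=(8/\beta_p)\,\mathrm{d}s$, which gives
$$28\alpha\int_0^\infty t^{\alpha-1}\,\sharp\{n\colon a_n(\mathcal{H})\ge \beta_p t/8\}\,\mathrm{d}t \;=\; 28\left(\frac{8}{\beta_p}\right)^{\!\alpha}\alpha\int_0^\infty s^{\alpha-1}\,\sharp\{n\colon a_n(\mathcal{H})\ge s\}\,\mathrm{d}s.$$
Finally, applying the layer-cake identity in the reverse direction to the right-hand integral identifies it with $\sum_n a_n^\alpha(\mathcal{H})$, yielding the desired bound with constant $28(8/\beta_p)^\alpha$.

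There is no real obstacle in this step: the substantive work (the combinatorial bounding of $\sharp\{k:\nu_k>t\}$ by $\sharp\{n:a_n(\mathcal{H})\ge\beta_p t/8\}$) has already been done in Lemma \ref{KK}. The only thing one must check with mild care is that the scaling constants are tracked correctly through the change of variables, and that the layer-cake identity is indeed legitimate term-by-term for the (possibly bi-infinite, possibly divergent) sum $\sum_{k\in\mathbb{Z}}\nu_k^\alpha$; if this sum is infinite, the inequality is either trivial or forces the right-hand side also to be infinite via the same layer-cake representation, so there is no issue of finiteness to resolve before writing down the chain of equalities above.
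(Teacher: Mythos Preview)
Your proposal is correct and follows essentially the same approach as the paper: both apply the layer-cake identity $\sum_k x_k^\alpha=\alpha\int_0^\infty t^{\alpha-1}\sharp\{k:x_k>t\}\,\mathrm{d}t$, insert Lemma~\ref{KK}, perform the change of variables $\tau=\beta_p t/8$, and then apply the layer-cake identity in reverse to recover $\sum_n a_n^\alpha(\mathcal{H})$. Your additional remarks on the $\ge$ versus $>$ distinction and on the possible divergence of the left-hand side are correct and make explicit points the paper leaves implicit.
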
\begin{proof} By \cite[Proposition II.1.8]{BenShar} and in view of Lemma \ref{KK}
\begin{eqnarray*}\sum_{k\in\mathbb{Z}}\nu_k^\alpha=\alpha\int_0^\infty t^{\alpha-1}\sharp\{k\in\mathbb{Z}\colon \nu_k>t\}\mathrm{d}t\\
\le 28\alpha\int_0^\infty t^{\alpha-1}\sharp\{n\in\mathbb{N}\colon a_n(\mathcal{H})\ge \beta_pt/8\}\mathrm{d}t\\=28\biggl(\frac{8}{\beta_p}\biggr)^\alpha
\int_0^\infty \tau^{\alpha-1}\sharp\{n\in\mathbb{N}\colon a_n(\mathcal{H})\ge \tau\}\mathrm{d}\tau\\=
28\biggl(\frac{8}{\beta_p}\biggr)^\alpha\sum_{n\in\mathbb{N}}a_n^\alpha(\mathcal{H}).
\end{eqnarray*}
\end{proof}

\begin{theorem}\label{notmain} Let $\alpha>1.$ Then
$$\sum_{n\in\mathbb{N}}a_n^\alpha(\mathcal{H})\le 2^\alpha\zeta(\alpha)7^{\alpha/p}\gamma_p^\alpha\sum_{m\in\mathbb{Z}}\mu_{m}^\alpha.$$
\end{theorem}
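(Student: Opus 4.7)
The plan is to proceed in parallel with Theorem \ref{main'}, but with Lemma \ref{Key2} playing the role that Lemma \ref{Key} played there. Concretely, for each $\varepsilon>0$ I want to build a partition $0=c_0<c_1<\dots<c_{N(\varepsilon)+1}=\infty$ such that $\mathcal{K}(I_n)\le\varepsilon$ on every piece $I_n=(c_n,c_{n+1})$, with $N(\varepsilon)$ controlled from above by the number of indices $m\in\mathbb{Z}$ for which $\mu_m$ exceeds a fixed multiple of $\varepsilon$. Lemma \ref{Key2} will then give $a_{N(\varepsilon)+2}(\mathcal{H})\le 7^{1/p}\varepsilon$, and the theorem will follow from the distribution-function identity
\[
\sum_{n\in\mathbb{N}} a_n^\alpha(\mathcal{H})=\alpha\int_0^\infty t^{\alpha-1}\sharp\{n\in\mathbb{N}\colon a_n(\mathcal{H})>t\}\,\mathrm{d}t
\]
already used in the proof of Theorem \ref{main'}.

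The natural atoms for such a partition are the intervals $\delta_m=(x_m,x_{m+1})$ produced by the fairway construction. Applying \eqref{HK} on $\delta_m$ together with Lemma \ref{lem1} gives $\mathcal{K}(\delta_m)\ll\gamma_p\mu_m$, and more generally $\mathcal{K}(I)\ll\gamma_p\max\{\mu_m\colon\delta_m\cap I\neq\emptyset\}$ for any interval $I$; the key point is that, by \eqref{fairway}, the input windows $[a(x_m),b(x_{m+1})]$ have overlap multiplicity bounded by two. I would declare $\delta_m$ ``large'' when $\mu_m>\eta\varepsilon$ (with $\eta=\eta(p)$ read off from $\gamma_p$) and keep every such $\delta_m$ as its own piece $I_n$, while consecutive ``small'' $\delta_m$'s are merged into a single piece on which the bound $\mathcal{K}(I_n)\le\varepsilon$ is automatic. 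This gives $N(\varepsilon)+1\ll\sharp\{m\in\mathbb{Z}\colon\mu_m>\eta\varepsilon\}$.

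Inserting this into the distributional identity via Lemma \ref{Key2} and substituting $t=7^{1/p}\varepsilon$, the layer-cake formula in reverse produces
\[
\sum_{n\in\mathbb{N}} a_n^\alpha(\mathcal{H})\ll 7^{\alpha/p}\gamma_p^\alpha\sum_{m\in\mathbb{Z}}\mu_m^\alpha,
\]
which already has the correct shape. The remaining factor $2^\alpha\zeta(\alpha)$ in the statement is where Lemma \ref{KKK} and the hypothesis $\alpha>1$ come in: when a ``large'' $\delta_m$ has to be subdivided further to bring $\mathcal{K}$ below $\varepsilon$, Lemma \ref{KKK} bounds the number of subdivision points by an $\ell^p$-type count of the form $\sum_j(\mathcal{K}(I_j\cap\delta_m))^p\ll\mu_m^p$. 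After a dyadic decomposition at scales $\varepsilon_k=2^{-k}\varepsilon$, this produces a geometric series which, when paired with the decreasing rearrangement of $\{\mu_m\}$, yields exactly $\sum_n n^{-\alpha}=\zeta(\alpha)$.

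The main obstacle I expect is precisely this last bookkeeping step. The merging of small $\delta_m$'s into a single merged $I_n$ is straightforward from \eqref{HK}, but extracting the sharp constant $2^\alpha\zeta(\alpha)7^{\alpha/p}\gamma_p^\alpha$ requires tracking at each dyadic scale how many partition points fall inside any given $\delta_m$, and this is where the $p$-normalised form of Lemma \ref{KKK} has to be used carefully in conjunction with H\"older's inequality. The restriction $\alpha>1$ plays its role only at the very end, to ensure $\zeta(\alpha)<\infty$; any cruder partition construction would cost a factor which diverges as $\alpha\searrow 1$.
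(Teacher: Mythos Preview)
Your overall strategy --- feed Lemma~\ref{Key2} into the layer--cake identity and bound $N(\varepsilon)$ in terms of the $\mu_m$ --- is exactly what the paper does. The gap is in the middle step: your claimed bound $N(\varepsilon)+1\ll\sharp\{m:\mu_m>\eta\varepsilon\}$ is not true as stated, and your proposed repair does not match what Lemma~\ref{KKK} actually says.

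The problem is that keeping a ``large'' $\delta_m$ as a single piece $I_n$ does not give $\mathcal{K}(I_n)\le\varepsilon$; you acknowledge that such $\delta_m$ must be subdivided, but then the count of large $m$'s no longer bounds $N(\varepsilon)$. Your description of Lemma~\ref{KKK} as an $\ell^p$--type bound $\sum_j\mathcal{K}(I_j\cap\delta_m)^p\ll\mu_m^p$ is incorrect: Lemma~\ref{KKK} is a \emph{linear} H\"older estimate, and combined with the upper half of Lemma~\ref{lem1} and \eqref{HK} it yields $\sum_{i}\mathcal{K}(I_i)\le 2\gamma_p\mu_m$ for the $I_i\subset\delta_m$. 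No dyadic decomposition or rearrangement is involved.

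The paper does not construct a partition at all. It takes the partition $\{c_n\}$ with $\mathcal{K}(I_n)=\varepsilon$ that already exists by continuity of $\mathcal{K}$, and asks how the points $c_n$ are distributed among the $\delta_m$. If $c_n,c_{n+1}$ lie in distinct $\delta_m$'s, then Lemma~\ref{lem1} with \eqref{HK} gives $\varepsilon\le 2\gamma_p\mu_{m_0}$ for some $m_0$ between them. If instead $l>1$ consecutive points $c_{n_1},\dots,c_{n_1+l-1}$ fall inside a single $\delta_m$, the linear bound above gives $l\varepsilon\le 2\gamma_p\mu_m$. Writing $l=\sum_{j\ge1}\mathbf{1}[\,l\ge j\,]$, these two cases together yield
\[
N(\varepsilon)\le\sum_{l\ge1}\sharp\bigl\{m\in\mathbb{Z}:\mu_m\ge \tfrac{l\varepsilon}{2\gamma_p}\bigr\}.
\]
Substituting this into the layer--cake integral and changing variables $\tau=tl/(2\cdot 7^{1/p}\gamma_p)$ in each summand produces the factor $\sum_{l\ge1}l^{-\alpha}=\zeta(\alpha)$ directly; this is precisely where the hypothesis $\alpha>1$ enters.
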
\begin{proof} Given $N=N(\varepsilon)$ there exist two options for positioning of the intervals $I_n,$ $0\le n\le N-1$ with respect to the segments $\delta_m:=[x_m,x_{m+1}):$

(1) two neighbour points, say $c_{n_0}$ and $c_{n_0+1},$ are in different intervals $\delta_m,$ say $\delta_{m(n_0)}$ and $\delta_{m(n_0)+1},$ where $m(n_0)<m(n_0+1);$

(2) not less than two neighbour points, say $c_{n_1},\ldots,c_{n_1+l-1}$ with $l>1,$ are in the same interval $\delta_m,$ that is $m(n_1)=m(n_1+1)=\ldots=m(n_1+l-1)$ (or $I_i\subseteq \delta_{m(n_1)}$ for $n_1\le i\le n_1+l-1$), where $l>1$.

By Lemma \ref{lem1} and \eqref{HK} we have
\begin{eqnarray*}
\varepsilon=\mathcal{K}(I_n)\le 2\gamma_p\sup_{m(n_0)\le m\le m(n_0+1)}\mu_m=:2\,\gamma_p\,\mu_{m_0}(n)
\end{eqnarray*} in the first situation. In the second case, by Lemmas \ref{lem1} and \ref{KKK}
\begin{eqnarray*}
\varepsilon l=\sum_{i=n_1}^{n_1+l-1}\mathcal{K}(I_i)\le 2\,\gamma_p\,\mu_{m_1}(n).
\end{eqnarray*} We have
\begin{eqnarray*}N(\varepsilon)=\sharp\bigl\{n\in\mathbb{N}\colon \mu_{m_0}(n)\ge \frac{\varepsilon}{2\gamma_p}\bigr\}+
\sum_{l>1}\sharp\bigl\{n\in\mathbb{N}\colon \mu_{m_1}(n)\ge \frac{\varepsilon l}{2\gamma_p}\bigr\}\\
\le\sum_{l=1}^\infty\sharp\bigl\{n\in\mathbb{N}\colon \mu_{m}(n)\ge \frac{\varepsilon l}{2\gamma_p}\bigr\}
\le\sum_{n=1}^\infty\sharp\bigl\{m\in\mathbb{Z}\colon \mu_{m}\ge \frac{\varepsilon n}{2\gamma_p}\bigr\}.
\end{eqnarray*} On the strength of Lemma \ref{Key2} $$\sharp\{n\in\mathbb{N}\colon a_n(\mathcal{H})> 7^{1/p}\varepsilon\}\le N(\varepsilon)+1\le 2N(\varepsilon).$$
Thus, by \cite[Proposition II.1.8]{BenShar}
\begin{eqnarray*}\sum_{n\in\mathbb{N}}a_n^\alpha(\mathcal{H})=\alpha\int_0^\infty t^{\alpha-1}\sharp\{n\in\mathbb{N}\colon a_n(\mathcal{H})>t\}\mathrm{d}t\\
\le 2\alpha\int_0^\infty t^{\alpha-1}N(t/7^{1/p})\mathrm{d}t\\\le 2\alpha\int_0^\infty t^{\alpha-1}
\sum_{n=1}^\infty\sharp\bigl\{m\in\mathbb{Z}\colon \mu_{m}\ge \frac{t n}{2\cdot 7^{1/p}\gamma_p}\bigr\}\mathrm{d}t\\
\le 2^\alpha \alpha 7^{\alpha/p}\gamma_p^\alpha\int_0^\infty \tau^{\alpha-1}
\sum_{n=1}^\infty \frac{1}{n^{\alpha}}\sharp\{m\in\mathbb{Z}\colon \mu_{m}\ge \tau\}\mathrm{d}\tau\\=
2^\alpha\zeta(\alpha)7^{\alpha/p}\gamma_p^\alpha\sum_{m\in\mathbb{Z}}\mu_{m}^\alpha.
\end{eqnarray*}
\end{proof}

It follows from Theorem \ref{notmain} and properties of the intervals $\delta_m$ that
\begin{equation}\label{int}
\|\mathcal{H}\|_{\mathbf{S}_\alpha}\ll\biggl(\sum_{m\in\mathbb{Z}}\mu_{m}^\alpha\biggr)^{\frac{1}{\alpha}}\ll \mathcal{V}\end{equation} for all $1<\alpha,p<\infty,$ where
\begin{equation}\label{a<p}
\mathcal{V}^\alpha:=
\int_0^\infty \biggl[\int_{b^{-1}(t)}^{a^{-1}(t)}w^p\biggr]^{\frac{\alpha}{p}}
\biggl[\int_{a(\sigma^{-1}(t))}^{b(\sigma^{-1}(t))}v^{p'}\biggr] ^{\frac{\alpha}{p'}-1}v^{p'}(t)\mathrm{d}t<\infty.
\end{equation} By the same reason we have
\begin{equation}\label{int'}
\|\mathcal{H}\|_{\mathbf{S}_\alpha}\ll\biggl(\sum_{m\in\mathbb{Z}}\mu_{m}^\alpha\biggr)^{\frac{1}{\alpha}}\ll \mathcal{W},\end{equation} for $\alpha\ge p,$ where
\begin{equation}\label{a>p}\mathcal{W}^\alpha:=
\int_0^\infty \biggl[\int_{b^{-1}(\sigma(t))}^{a^{-1}(\sigma(t))}w^p\biggr]^{\frac{\alpha}{p}-1}
\biggl[\int_{a(t)}^{b(t)}v^{p'}\biggr]^{\frac{\alpha}{p'}}w^{p}(t)\mathrm{d}t<\infty.
\end{equation}

For establishing \eqref{int} notice that $$\sum_{m\in\mathbb{Z}}\mu_{m}^\alpha=\sum_{k\in\mathbb{Z}}
\sum_{-j_a(k)\le j\le j_b(k)-1}\mu_{j(k)}^\alpha=:\sum_{k\in\mathbb{Z}}J_k.$$ Besides, by \cite[Lemmas 2.7 and 2.8]{SU}  it holds that \begin{equation}\label{U}\int_{a(x_{j(k)})}^{b(x_{j(k)+1})}v^{p'}\approx \int_{a(t)}^{b(t)}v^{p'},\hspace{1cm}t\in[x_{j(k)},x_{j(k)+1}].\end{equation} The rest follows from arguments analogous to those in the proof of Theorem \ref{lom} (see \eqref{U1} -- \eqref{U10}). The estimate \eqref{int'} is going on the strength of \eqref{U} and by the following facts: \begin{eqnarray*}J_k
\simeq
\sum_{-j_a(k)\le j\le -1}\int_{x_{j(k)}}^{x_{j(k)+1}}\biggl(\int_{x_{j(k)}}^{t}w^p\biggr)^{\frac{\alpha}{p}-1}w^p(t)\mathrm{d}t
\biggl(\int_{a(x_{j(k)})}^{b(x_{j(k)+1})}v^{p'}\biggr)^{\frac{\alpha}{p'}}\\
+
\sum_{0\le j\le j_b(k)-1}\int_{x_{j(k)}}^{x_{j(k)+1}}\biggl(\int_t^{x_{j(k)+1}}w^p\biggr)^{\frac{\alpha}{p}-1}w^p(t)\mathrm{d}t
\biggl(\int_{a(x_{j(k)})}^{b(x_{j(k)+1})}v^{p'}\biggr)^{\frac{\alpha}{p'}}\\
\le
\sum_{-j_a(k)\le j\le -1}\int_{x_{j(k)}}^{x_{j(k)+1}}\biggl(\int_{b^{-1}(\sigma(t))}^{t}w^p\biggr)^{\frac{\alpha}{p}-1}w^p(t)\mathrm{d}t
\biggl(\int_{a(x_{j(k)})}^{b(x_{j(k)+1})}v^{p'}\biggr)^{\frac{\alpha}{p'}}\\
+
\sum_{0\le j\le j_b(k)-1}\int_{x_{j(k)}}^{x_{j(k)+1}}\biggl(\int_t^{a^{-1}(\sigma(t))}w^p\biggr)^{\frac{\alpha}{p}-1}w^p(t)\mathrm{d}t
\biggl(\int_{a(x_{j(k)})}^{b(x_{j(k)+1})}v^{p'}\biggr)^{\frac{\alpha}{p'}}.
\end{eqnarray*}

In some special cases of the boundaries $a,b$ and weights $v,w$ the two-sided estimate obtained in Theorems \ref{main'} and \ref{notmain} for the Schatten-von Neumann norm $\|\mathcal{H}\|_{\mathbf{S}_\alpha}$ of the Hardy-Steklov operator $\mathcal{H}$ becomes symmetric giving a criterion for $\mathcal{H}$ to be in a class $\mathbf{S}_\alpha$ for $1<\alpha<\infty.$

\begin{example}\label{ex}
Let $a(x)=Ax,$ $b(x)=Bx$ for some $0<A<B<\infty$ and $v\equiv 1.$ Then, by definition \eqref{fairway}, the fairway $\sigma(x)=(A+B)x/2.$

Given $w$, $\alpha$ and $p$ consider a quantity $\Lambda_k^\alpha:={\tilde \nu}_{k-1}^\alpha+{\tilde \nu}_{k}^\alpha+{\tilde \nu}_{k+1}^\alpha.$
Denote $\xi_k^+:=\sigma^{-1}(b(\xi_k))$ and $\xi_k^-:=\sigma^{-1}(a(\xi_k)).$ Then
\begin{eqnarray*}\Lambda_k^p\approx\bigl[b(\xi_k^-)-a(\xi_k^-)\bigr]^{p-1}\int_{\xi_{k-1}}^{\xi_{k}}w^p
+ \bigl[b(\xi_k^+)-a(\xi_k^+)\bigr]^{p-1}\int_{\xi_{k}}^{\xi_{k+1}}w^p\\
+ \bigl[b(\xi_{k+1}^+)-a(\xi_{k+1}^+)\bigr]^{p-1}\int_{\xi_{k+1}}^{\xi_{k+2}}w^p=:{\tilde \Lambda}^p_k\end{eqnarray*}
provided $\xi_k^+=\xi_{k+1}^-.$ By the construction $\xi_k^-=2A\xi_k/(A+B),$ $\xi_k^+=2B\xi_k/(A+B)$ and, therefore, $a(\xi_k^+)=2AB\xi_k/(a+B)=b(\xi_k^-).$ Since $b(\xi_k)-a(\xi_k)>b(\xi_k^-)-\sigma(\xi_k^-)$ and $b(\xi_k)-a(\xi_k)>\sigma(\xi_k^+)-a(\xi_k^+)$ then there exist $1<c_1,c_2<\infty$ such that $B-A=A c_1[2B/(A+B)-1]$ and $B-A=B c_2[1-2A/(A+B)],$ namely, $c_1=1+B/A$ and $c_2=1+A/B.$ Together with property \eqref{fairway} it gives:
\begin{eqnarray*}
b(\xi_k^-)-a(\xi_k^-)=2[b(\xi_k^-)-\sigma(\xi_k^-)]=2[b(\xi_k)-a(\xi_k)]/c_1,\\
b(\xi_k^+)-a(\xi_k^+)=2[\sigma(\xi_k^+)-a(\xi_k^+)]=2[b(\xi_k)-a(\xi_k)]/c_2.
\end{eqnarray*} Analogously,
\begin{eqnarray*}
b(\xi_{k+1}^+)-a(\xi_{k+1}^+)=2[b(\xi_{k+1})-a(\xi_{k+1})]/c_2.
\end{eqnarray*} Moreover, since $\xi_{k+1}=\frac{B}{A}\xi_k$ then $$b(\xi_{k+1})-a(\xi_{k+1})=[B-A]\xi_{k+1}=\frac{B}{A}[B-A]\xi_k=\frac{B}{A}[b(\xi_{k})-a(\xi_{k})]$$ and $b(\xi_{k+1})-a(\xi_{k})=c_1[b(\xi_{k})-a(\xi_{k})].$
Therefore, in view of $1/c_1<1/c_2,$
\begin{eqnarray*}
{\tilde \Lambda}^p_k=\frac{2^{p-1}}{c_1^{p-1}}[b(\xi_k)-a(\xi_k)]^{p-1}\int_{\xi_{k-1}}^{\xi_{k}}w^p+
\frac{2^{p-1}}{c_2^{p-1}}[b(\xi_k)-a(\xi_k)]^{p-1}\int_{\xi_{k}}^{\xi_{k+1}}w^p\\
+\frac{2^{p-1}}{c_2^{p-1}}[b(\xi_{k+1})-a(\xi_{k+1})]^{p-1}\int_{\xi_{k+1}}^{\xi_{k+2}}w^p\\
>\frac{2^{p-1}}{c_1^{p-1}}[b(\xi_k)-a(\xi_k)]^{p-1}\int_{\xi_{k-1}}^{\xi_{k+1}}w^p+
\frac{2^{p-1}}{c_2^{p-1}}\frac{B^{p-1}}{A^{p-1}}[b(\xi_{k})-a(\xi_{k})]^{p-1}\int_{\xi_{k+1}}^{\xi_{k+2}}w^p\\
>\frac{2^{p-1}}{c_1^{p-1}}[b(\xi_k)-a(\xi_k)]^{p-1}\int_{\xi_{k-1}}^{\xi_{k+2}}w^p=
\frac{2^{p-1}}{c_1^{2(p-1)}}[b(\xi_{k+1})-a(\xi_k)]^{p-1}\int_{\xi_{k-1}}^{\xi_{k+2}}w^p.
\end{eqnarray*} Further, by Theorem \ref{main'} and in view of ${\tilde \nu}_k\le\nu_k$ we obtain for $\alpha\ge p$
\begin{eqnarray*}\|\mathcal{H}\|_{\mathbf{S}_\alpha}^\alpha&\gg&\sum_{k\in\mathbb{Z}}{\tilde \nu}_k\ge\frac{1}{3}\sum_{k\in\mathbb{Z}}\Lambda_k^\alpha
\approx\sum_{k\in\mathbb{Z}}{\tilde \Lambda}_k^\alpha\gg\\&\gg&c_1^{-2\alpha/p'}
[b(\xi_{k+1})-a(\xi_k)]^{\frac{\alpha}{p'}}\biggl(\int_{\xi_{k-1}}^{\xi_{k+2}}w^p\biggr)^{\frac{\alpha}{p}}\\
&\gg&c_1^{-2\alpha/p'}
[b(\xi_{k+1})-a(\xi_k)]^{\frac{\alpha}{p'}}\int_{\xi_{k}}^{\xi_{k+1}}\biggl(\int_{\xi_{k-1}}^{t}w^p\biggr)^{\frac{\alpha}{p}-1}w^p(t)\mathrm{d}t\\
&&+c_1^{-2\alpha/p'}
[b(\xi_{k+1})-a(\xi_k)]^{\frac{\alpha}{p'}}\int_{\xi_{k}}^{\xi_{k+1}}\biggl(\int_t^{\xi_{k+2}}w^p\biggr)^{\frac{\alpha}{p}-1}w^p(t)\mathrm{d}t\\
&\gg&
c_1^{-2\alpha/p'}
[b(\xi_{k+1})-a(\xi_k)]^{\frac{\alpha}{p'}}\int_{\xi_{k}}^{\xi_{k+1}}\biggl(\int_{b^{-1}(\sigma(t))}^{a^{-1}(\sigma(t))} w^p\biggr)^{\frac{\alpha}{p}-1}w^p(t)\mathrm{d}t\\
&\ge&
c_1^{-2\alpha/p'}
\int_{\xi_{k}}^{\xi_{k+1}}\biggl(\int_{b^{-1}(\sigma(t))}^{a^{-1}(\sigma(t))} w^p\biggr)^{\frac{\alpha}{p}-1}
[b(t)-a(t)]^{\frac{\alpha}{p'}}w^p(t)\mathrm{d}t.\end{eqnarray*}
\end{example} This means that for this particular case of $a,b$ and $v$ the norm $\|\mathcal{H}\|_{\mathbf{S}_\alpha},$ $1<\alpha<\infty,$ is equivalent to the functional $$F:=\biggl(\int_{\xi_{k}}^{\xi_{k+1}}\biggl(\int_{b^{-1}(\sigma(t))}^{a^{-1}(\sigma(t))} w^p\biggr)^{\frac{\alpha}{p}-1}
[b(t)-a(t)]^{\frac{\alpha}{p'}}w^p(t)\mathrm{d}t\biggr)^{\frac{1}{\alpha}},$$ which finiteness is necessary and sufficient for belonging $\mathcal{H}$ to the Schatten-von Neumann classes $\mathbf{S}_{\alpha}$ for all $1<\alpha<\infty.$

\section{An alternative upper estimate}

\begin{theorem}\label{lom} Let $1<\alpha,p<\infty.$
Suppose that the operator $\mathcal{H}:L^p(\mathbb{R}^+)\to L^p(\mathbb{R}^+)$ is compact.
If $\alpha\le p$ then
\begin{equation}\label{p<1}
\|\mathcal{H}\|_{\mathbf{S}_\alpha}\ll \mathcal{V}.
\end{equation} In the case $p\le\alpha$ we have
\begin{equation}\label{p>1}
\|\mathcal{H}\|_{\mathbf{S}_\alpha}\ll \mathcal{W}.\end{equation}
\end{theorem}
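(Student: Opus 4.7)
The plan is to follow the route of E.N. Lomakina outlined in the introduction. First I would invoke the one-sided counting-function inequality from \cite[Lemma 2]{LomI}, which dominates $\#\{n\colon a_n(\mathcal{H})>t\}$ by a multiple of the analogous count for an auxiliary weighted Hardy operator $H^\ast$ built from $\mathcal{H}$ via the fairway $\sigma$; inserting into this the classical discrete upper bounds for Schatten norms of Hardy operators from \cite{EEH1988,EEH1997,LomII} yields a discrete sufficient condition of the form $\|\mathcal{H}\|_{\mathbf{S}_\alpha}^\alpha \ll \sum_{k\in\mathbb{Z}} G_k^\alpha$, with each $G_k$ localized on the block $[\xi_k,\xi_{k+1}]$ of the fairway partition \eqref{se}.

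The heart of the argument is then converting this discrete sum into the continuous functionals $\mathcal{V}$ and $\mathcal{W}$. I would use the refinement $\{x_j\}_{-j_a(k)\le j\le j_b(k)-1}$ of $[\xi_k,\xi_{k+1}]$ together with the equivalence $\int_{a(x_{j(k)})}^{b(x_{j(k)+1})} v^{p'} \approx \int_{a(t)}^{b(t)} v^{p'}$ for $t\in[x_{j(k)},x_{j(k)+1}]$ from \cite[Lemmas 2.7, 2.8]{SU}. The recursions $b(x_j)=\sigma(x_{j+1})$ and $a(x_j)=\sigma(x_{j-1})$ in the construction moreover give $x_j = b^{-1}(\sigma(x_{j+1})) = a^{-1}(\sigma(x_{j-1}))$, which identifies $\int_{x_m}^{x_{m+1}} w^p$ with $\int_{b^{-1}(\sigma(t))}^{a^{-1}(\sigma(t))} w^p$ up to a constant for $t\in[x_m,x_{m+1}]$.

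The two cases of the theorem split by which factor one opens via the fundamental theorem of calculus. For $\alpha\le p$ I would open the $v$-factor, writing $\bigl(\int_{a(x_m)}^{b(x_{m+1})} v^{p'}\bigr)^{\alpha/p'} = (\alpha/p')\int_{a(x_m)}^{b(x_{m+1})} v^{p'}(s)\bigl(\int_{a(x_m)}^{s} v^{p'}\bigr)^{\alpha/p'-1}\,ds$, then replace the inner integral by its equivalent $\int_{a(\sigma^{-1}(s))}^{b(\sigma^{-1}(s))} v^{p'}$ (valid via \eqref{U} after the change of variable $s=\sigma(t)$) and combine with the $w$-factor, which together with the preceding identification and the splitting of $[\xi_k,\xi_{k+1}]$ around the fairway midpoint $x_0$ reproduces the integrand of $\mathcal{V}^\alpha$. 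For $\alpha\ge p$ I would symmetrically open the $w$-factor, writing $\bigl(\int_{x_m}^{x_{m+1}} w^p\bigr)^{\alpha/p} = (\alpha/p)\int_{x_m}^{x_{m+1}} w^p(t)\bigl(\int_{x_m}^{t} w^p\bigr)^{\alpha/p-1}\,dt$, and exploit the non-negative exponent $\alpha/p-1$ to enlarge $[x_m,t]$ up to $[b^{-1}(\sigma(t)),a^{-1}(\sigma(t))]$; combined with the $v^{p'}$-factor equivalenced via \eqref{U}, this yields the integrand of $\mathcal{W}^\alpha$.

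The principal obstacle will be the bookkeeping at the boundary indices $j=-j_a(k)$ and $j=j_b(k)-1$ of each block, where the clean recursions $b(x_j)=\sigma(x_{j+1})$ and $a(x_j)=\sigma(x_{j-1})$ fail and one must fall back on the alternatives (2) and (3) of the $\{x_j\}$ construction, together with the control of the implicit constants in the $\approx$-equivalences uniformly as $k$ ranges over $\mathbb{Z}$; both issues are precisely what \cite[Lemmas 2.7 and 2.8]{SU} are designed to resolve.
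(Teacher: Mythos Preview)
Your proposal is essentially correct and follows the same route as the paper. The paper's proof likewise invokes \cite[Lemma 2, Th.~2--4]{LomI} and \cite[Th.~2--4]{LomII} after splitting $\mathcal{H}$ on each block $[\xi_k,\xi_{k+1}]$ into four Hardy-type pieces $T_{k,1},T_{k,2},S_{k,1},S_{k,2}$ (two Volterra operators with upper limit $a(x)$ and two with lower limit $b(x)$, cut at $b(\xi_k)=a(\xi_{k+1})$ and at the fairway midpoint), then bounds the resulting sums $\Sigma_1,\dots,\Sigma_4$ by $\mathcal{V}^\alpha$ or $\mathcal{W}^\alpha$ exactly as you describe: for $p\le\alpha$ by opening the $w$-factor and enlarging $[x_m,t]$ to $[b^{-1}(\sigma(t)),a^{-1}(\sigma(t))]$, and for $\alpha\le p$ by using the $\{x_{j(k)}\}$ refinement together with the equivalence \eqref{U} and the disjointness of the intervals $(\sigma(x_{j(k)}),b(x_{j(k)}))$. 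The only point where the paper is more concrete than your sketch is that it names the four block operators explicitly rather than speaking of a single auxiliary $H^\ast$, and it separates the sub-cases $j_b(k)=1$ versus $j_b(k)>1$ when handling $\Sigma_1+\Sigma_4$ for $\alpha\le p$ --- this is precisely the boundary-index bookkeeping you anticipated as the principal obstacle.
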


\begin{proof} Given boundaries $a(x),b(x)$ put the sequence $\{\xi_k\}_{k\in\mathbb{Z}}\subset(0,\infty)$ defined by the formula \eqref{se} and split the operator $\mathcal{H}$ into the sum $$\mathcal{H}f(x)=\sum_{k\in\mathbb{Z}} \bigl[T_{k,1}f(x)+T_{k,2}f(x)+S_{k,1}f(x)+S_{k,2}f(x)\bigr]$$ of four block-diagonal operators
$$T_if(x)=\sum_{k\in\mathbb{Z}}T_{i,k}f(x),\ \ \ S_if(x)=\sum_{k\in\mathbb{Z}}S_{i,k}f(x),\ \ \ i=1,2,$$ of the forms
\begin{eqnarray*}T_{k,1}f(x):=w(x)\int_{a(x)}^{a(\xi_k)}f(y)v(y)\mathrm{d}y, \hspace{.2cm}x\in[\sigma^{-1}(a(\xi_{k+1})),\xi_{k+1}],\\
T_{k,2}f(x)=w(x)\int_{a(x)}^{b(\xi_k)}f(y)v(y)\mathrm{d}y, \hspace{.2cm}x\in[\xi_k,\sigma^{-1}(b(\xi_{k}))],\\
S_{k,1}f(x)=w(x)\int_{b(\xi_k)}^{b(x)}f(y)v(y)\mathrm{d}y, \hspace{.2cm}x\in[\xi_k,\sigma^{-1}(b(\xi_{k}))],\\
S_{k,2}f(x)=w(x)\int_{a(\xi_k)}^{b(x)}f(y)v(y)\mathrm{d}y ,\hspace{.2cm}x\in[\sigma^{-1}(a(\xi_{k+1})),\xi_{k+1}].\end{eqnarray*}
By \cite[Lemma 2, Th. 2, 3, 4]{LomI} and \cite[Th. 2, 3, 4]{LomII} we obtain
\begin{eqnarray*}
\|\mathcal{H}\|_{\mathbf{S}_\alpha}^\alpha\le\sum_{k}\|T_{k,1}\|_{\mathbf{S}_\alpha}^\alpha+
\sum_{k}\|T_{k,2}\|_{\mathbf{S}_\alpha}^\alpha+\sum_{k}\|S_{k,1}\|_{\mathbf{S}_\alpha}^\alpha
+\sum_{k}\|S_{k,2}\|_{\mathbf{S}_\alpha}^\alpha\\ \ll
\sum_{k}\int_{\sigma^{-1}(a(\xi_{k+1}))}^{\xi_{k+1}}
\biggl(\int_{a(s)}^{a(\xi_{k+1})}v^{p'}\biggr)^{\frac{\alpha}{p'}}
\biggl(\int_{\sigma^{-1}(a(\xi_{k+1}))}^{s}w^p\biggr)^{\frac{\alpha}{p}-1}
w^{p}(s)\mathrm{d}s\\
+\sum_{k}
\int_{\xi_{k}}^{\sigma^{-1}(b(\xi_{k}))}\biggl(\int_{a(s)}^{b(\xi_k)}v^{p'}\biggr)^{\frac{\alpha}{p'}}
\biggl(\int_{\xi_{k}}^{s}w^p\biggr)^{\frac{\alpha}{p}-1}w^{p}(s)\mathrm{d}s\\
+\sum_{k}
\int_{\xi_{k}}^{\sigma^{-1}(b(\xi_{k}))}\biggl(\int_{b(\xi_k)}^{b(s)}v^{p'}\biggr)^{\frac{\alpha}{p'}}
\biggl(\int_s^{\sigma^{-1}(b(\xi_{k}))}w^p\biggr)^{\frac{\alpha}{p}-1}w^{p}(s)\mathrm{d}s\\
+\sum_{k}\int_{\sigma^{-1}(a(\xi_{k+1}))}^{\xi_{k+1}}
\biggl(\int_{a(\xi_{k+1})}^{b(s)}v^{p'}\biggr)^{\frac{\alpha}{p'}}
\biggl(\int_{s}^{\xi_{k+1}}w^p\biggr)^{\frac{\alpha}{p}-1}
w^{p}(s)\mathrm{d}s\\
=:\Sigma_1+\Sigma_2+\Sigma_3+\Sigma_4.
\end{eqnarray*}

Let $\alpha\le p$ first. Any fixed $k\in\mathbb{Z}$ we introduce an additional  sequence $\{x_{j(k)}\}_{j=-j_a(k)}^{j_b(k)}$ analogous to that on the page \pageref{sequ}.

Denote $\xi_k^+:=\sigma^{-1}(b(\xi_{k}))$ and $\xi_{k+1}^-=\sigma^{-1}(a(\xi_{k+1})).$
If $j_b=1$ then $\sigma(\xi_{k+1})\le b(\xi_{k+1}^-)$ and on the strength of the property \eqref{fairway} of $\sigma$
\begin{eqnarray*}
\Sigma_1\le \frac{p}{\alpha}\sum_{k}\biggl(\int_{a(\xi_{k+1})}^{b(\xi_{k+1}^-)}v^{p'}\biggr)^{\frac{\alpha}{p'}}
\biggl(\int_{\xi_{k+1}^-}^{\xi_{k+1}}w^p\biggr)^{\frac{\alpha}{p}}\end{eqnarray*} and
\begin{eqnarray*}
\Sigma_4\le
2^{\alpha/p'} \sum_{k}\int_{\xi_{k+1}^-}^{\xi_{k+1}}
\biggl(\int_{a(s)}^{\sigma(s)}v^{p'}\biggr)^{\frac{\alpha}{p'}}
\biggl(\int_{s}^{\xi_{k+1}}w^p\biggr)^{\frac{\alpha}{p}-1}
w^{p}(s)\mathrm{d}s\\
\le 2^{\alpha/p'}\frac{p}{\alpha}\sum_{k}\biggl(\int_{a(\xi_{k+1}^-)}^{\sigma(\xi_{k+1})}v^{p'}\biggr)^{\frac{\alpha}{p'}}
\biggl(\int_{\xi_{k+1}^-}^{\xi_{k+1}}w^p\biggr)^{\frac{\alpha}{p}}.\end{eqnarray*}
Therefore, by \eqref{fairway} and in view of $\sigma(\xi_{k+1})\le b(\xi_{k+1}^-)$
\begin{eqnarray*}\Sigma_{1,4}:=\Sigma_1+\Sigma_4\ll \sum_{k}\biggl(\int_{a(\xi_{k+1}^-)}^{b(\xi_{k+1}^-)}v^{p'}\biggr)^{\frac{\alpha}{p'}}
\biggl(\int_{\xi_{k+1}^-}^{\xi_{k+1}}w^p\biggr)^{\frac{\alpha}{p}}\\
\le2^{\alpha/p'+1}\sum_{k}\biggl(\int_{a(\xi_{k+1}^-)}^{b(\xi_{k+1}^-)}v^{p'}\biggr)^{-1}
\biggl(\int_{\sigma(\xi_{k+1}^-)}^{b(\xi_{k+1}^-)}v^{p'}\biggr)^{\frac{\alpha}{p'}+1}
\biggl(\int_{\xi_{k+1}^-}^{\xi_{k+1}}w^p\biggr)^{\frac{\alpha}{p}}.\end{eqnarray*}
Notice that if $\sigma(\xi_{k+1}^-)\le t\le b(\xi_{k+1}^-)$ then $b(\xi_{k+1}^-)\le b(\sigma^{-1}(t)),$ $a(\xi_{k+1}^-)\le a(\sigma^{-1}(t))$ and $b^{-1}(t)\le \xi_{k+1}^-<\xi_{k+1}\le a^{-1}(t).$ Besides, by \eqref{fairway} \begin{equation}\label{sigma'}\int_{a(\sigma^{-1}(t))}^{t}v^{p'}(y)\mathrm{d}y =\int_{t}^{b(\sigma^{-1}(t))}v^{p'}(y)\mathrm{d}y,\hspace{1cm}t>0.\end{equation}
Thus,
\begin{eqnarray*}\Sigma_{1,4}\ll \sum_{k}\biggl(\int_{a(\xi_{k+1}^-)}^{b(\xi_{k+1}^-)}\!\!v^{p'}\biggr)^{-1}\!\!\! \int_{\sigma(\xi_{k+1}^-)}^{b(\xi_{k+1}^-)}
\biggl(\int_{t}^{b(\xi_{k+1}^-)}\!\!v^{p'}\biggr)^{\frac{\alpha}{p'}}v^{p'}(t)\mathrm{d}t
\biggl(\int_{\xi_{k+1}^-}^{\xi_{k+1}}\!\!w^p\biggr)^{\frac{\alpha}{p}}\\
\le \sum_{k} \int_{\sigma(\xi_{k+1}^-)}^{b(\xi_{k+1}^-)}
\biggl(\int_{t}^{b(\xi_{k+1}^-)}v^{p'}\biggr)^{\frac{\alpha}{p'}}\biggl(\int_{a(\xi_{k+1}^-)}^{t}v^{p'}\biggr)^{-1}
\biggl(\int_{b^{-1}(t)}^{a^{-1}(t)}w^p\biggr)^{\frac{\alpha}{p}}v^{p'}(t)\mathrm{d}t\\
\le \sum_{k} \int_{\sigma(\xi_{k+1}^-)}^{b(\xi_{k+1}^-)}
\biggl(\int_{t}^{b(\sigma^{-1}(t))}v^{p'}\biggr)^{\frac{\alpha}{p'}}\!\!\biggl(\int_{a(\sigma^{-1}(t))}^{t}v^{p'}\biggr)^{-1}\!
\biggl(\int_{b^{-1}(t)}^{a^{-1}(t)}w^p\biggr)^{\frac{\alpha}{p}}\!\!\!v^{p'}(t)\mathrm{d}t\\
\le 2^{1-\alpha/p'}\sum_{k} \int_{a(\xi_{k+1})}^{b(\xi_{k+1})}
\biggl(\int_{a(\sigma^{-1}(t))}^{b(\sigma^{-1}(t))}v^{p'}\biggr)^{\frac{\alpha}{p'}-1} \biggl(\int_{b^{-1}(t)}^{a^{-1}(t)}w^p\biggr)^{\frac{\alpha}{p}}v^{p'}(t)\mathrm{d}t. \end{eqnarray*}

If $j_b>1$ then we obtain provided $\alpha/p\le 1$:
\begin{eqnarray*}
\Sigma_1&\le& \frac{p}{\alpha}\sum_{k}\biggl(\int_{a(\xi_{k+1}^-)}^{a(\xi_{k+1})}v^{p'}\biggr)^{\frac{\alpha}{p'}}
\biggl(\int_{\xi_{k+1}^-}^{\xi_{k+1}}w^p\biggr)^{\frac{\alpha}{p}}\\&\le&\frac{p}{\alpha}
\sum_{k}\sum_{0\le j\le j_b-1}\biggl(\int_{a(\xi_{k+1})}^{b(\xi_{k+1}^-)}v^{p'}\biggr)^{\frac{\alpha}{p'}}
\biggl(\int_{x_{j(k)}}^{x_{j(k)+1}}w^p\biggr)^{\frac{\alpha}{p}}.
\end{eqnarray*} Further, in view of $\alpha/p-1\le 0$, properties of $\sigma$ and $b(x_{k(j)})\ge\sigma(x_{j(k)+1})$
\begin{eqnarray*}
\Sigma_4=\sum_{k}\sum_{0\le j\le j_b-1}\int_{x_{j(k)}}^{x_{j(k)+1}}
\biggl(\int_{a(\xi_{k+1})}^{b(s)}v^{p'}\biggr)^{\frac{\alpha}{p'}}
\biggl(\int_{s}^{\xi_{k+1}}w^p\biggr)^{\frac{\alpha}{p}-1}
w^{p}(s)\mathrm{d}s\\
\le 2^{\alpha/p'}\sum_{k}\sum_{0\le j\le j_b-1}\int_{x_{j(k)}}^{x_{j(k)+1}}
\biggl(\int_{a(s)}^{\sigma(s)}v^{p'}\biggr)^{\frac{\alpha}{p'}}
\biggl(\int_{s}^{\xi_{k+1}}w^p\biggr)^{\frac{\alpha}{p}-1}
w^{p}(s)\mathrm{d}s\\
\le 2^{\alpha/p'}\frac{p}{\alpha}\sum_{k}\sum_{0\le j\le j_b-1}\biggl(\int_{a(x_{j(k)})}^{\sigma(x_{j(k)+1})} v^{p'}\biggr)^{\frac{\alpha}{p'}}
\biggl(\int_{x_{j(k)}}^{x_{j(k)+1}}w^p\biggr)^{\frac{\alpha}{p}}\\
\le  2^{2\alpha/p'}\frac{p}{\alpha}\sum_{k}\sum_{0\le j\le j_b-1}\biggl(\int_{\sigma(x_{j(k)})}^ {b(x_{j(k)})}v^{p'}\biggr)^{\frac{\alpha}{p'}}
\biggl(\int_{x_{j(k)}}^{x_{j(k)+1}}w^p\biggr)^{\frac{\alpha}{p}}.\end{eqnarray*}
This yields
\begin{eqnarray}\label{U1}\Sigma_{1,4}\ll \sum_{k}\sum_{0\le j\le j_b-1}\biggl(\int_{\sigma(x_{j(k)})}^{b(x_{j(k)})}v^{p'}\biggr)^ {\frac{\alpha}{p'}}
\biggl(\int_{x_{j(k)}}^{x_{j(k)+1}}w^p\biggr)^{\frac{\alpha}{p}}\\
\le 2\sum_{k}\sum_{0\le j\le j_b-1}\biggl(\int_{a(x_{j(k)})}^{b(x_{j(k)})}v^{p'}\biggr)^{-1}
\biggl(\int_{\sigma(x_{j(k)})}^{b(x_{j(k)})}v^{p'}\biggr)^{\frac{\alpha}{p'}+1}
\biggl(\int_{x_{j(k)}}^{x_{j(k)+1}}w^p\biggr)^{\frac{\alpha}{p}}.\nonumber\end{eqnarray}
Here again, if $\sigma(x_{j(k)})\le t\le b(x_{j(k)})$ then $b(x_{j(k)})\le b(\sigma^{-1}(t)),$ $a(x_{j(k)})\le a(\sigma^{-1}(t))$ and $b^{-1}(t)\le x_{j(k)}<x_{j(k)+1}\le a^{-1}(t).$
Therefore, as before
\begin{eqnarray}\label{U2}\Sigma_{1,4}&\ll& \sum_{k}\sum_{0\le j\le j_b-1}\int_{\sigma(x_{j(k)})}^{b(x_{j(k)})}
\biggl(\int_{a(\sigma^{-1}(t))}^{b(\sigma^{-1}(t))}v^{p'}\biggr)^{\frac{\alpha}{p'}-1}\nonumber\\&&\times
\biggl(\int_{b^{-1}(t)}^{a^{-1}(t)}w^p\biggr)^{\frac{\alpha}{p}}v^{p'}(t)\mathrm{d}t. \end{eqnarray}
By the construction the intervals $(\sigma(\xi_{k,j}),b(\xi_{k,j}))$ are disjoint. Thus,
\begin{eqnarray}\label{U3}~~~~~~~~~~\Sigma_{1,4}\ll\sum_{k} \int_{a(\xi_{k+1})}^{b(\xi_{k+1})}
\biggl(\int_{a(\sigma^{-1}(t))}^{b(\sigma^{-1}(t))}v^{p'}\biggr)^{\frac{\alpha}{p'}-1} \biggl(\int_{b^{-1}(t)}^{a^{-1}(t)}w^p\biggr)^{\frac{\alpha}{p}}v^{p'}(t)\mathrm{d}t. \end{eqnarray}
Analogously, one can prove that
\begin{eqnarray}\label{U10}\Sigma_{2,3}:&=&\Sigma_2+\Sigma_3\nonumber\\ &\ll& \sum_{k} \int_{a(\xi_{k})}^{b(\xi_{k})}
\biggl(\int_{a(\sigma^{-1}(t))}^{b(\sigma^{-1}(t))}v^{p'}\biggr)^{\frac{\alpha}{p'}-1} \biggl(\int_{b^{-1}(t)}^{a^{-1}(t)}w^p\biggr)^{\frac{\alpha}{p}}v^{p'}(t)\mathrm{d}t .\end{eqnarray}
Thus, taking into account that $b(\xi_k)=a(\xi_{k+1}),$ we obtain \eqref{p<1}.

Now let $p\le\alpha.$ We have
\begin{eqnarray*}
\Sigma_1\le \frac{p}{\alpha}\sum_{k}\biggl(\int_{a(\xi_{k+1}^-)}^{\sigma(\xi_{k+1}^-)}v^{p'} \biggr)^{\frac{\alpha}{p'}}
\biggl(\int_{\xi_{k+1}^-}^{\xi_{k+1}}w^p\biggr)^{\frac{\alpha}{p}}\\
=\sum_{k}\biggl(\int_{\sigma(\xi_{k+1}^-)}^{b(\xi_{k+1}^-)}v^{p'}\biggr)^{\frac{\alpha}{p'}}
\int_{\xi_{k+1}^-}^{\xi_{k+1}}\biggl(\int_{t}^{\xi_{k+1}}w^p\biggr)^{\frac{\alpha}{p}-1}w^p(t) \mathrm{d}t\\
\le\sum_{k}
\int_{\xi_{k+1}^-}^{\xi_{k+1}}\biggl(\int_{t}^{a^{-1}(\sigma(t))}w^p\biggr)^{\frac{\alpha}{p}-1}
\biggl(\int_{a(t)}^{b(t)}v^{p'}\biggr)^{\frac{\alpha}{p'}}w^p(t)\mathrm{d}t, \end{eqnarray*}
because $\xi_{k+1}\le a^{-1}(\sigma(t))$ and $a(t)\le\sigma(\xi_{k+1}^-)<b(\xi_{k+1}^-)\le b(t)$ for any $t\in[\xi_{k+1}^-,\xi_{k+1}].$
Further,
\begin{eqnarray*}
\Sigma_2\le \sum_{k}
\int_{\xi_{k}}^{\xi_k^+}\biggl(\int_{b^{-1}(\sigma(t))}^tw^p\biggr)^{\frac{\alpha}{p}-1}
\biggl(\int_{a(t)}^{b(t)}v^{p'}\biggr)^{\frac{\alpha}{p'}}w^p(t)\mathrm{d}t\end{eqnarray*} in view of $\xi_k\ge b^{-1}(\sigma(t))$ and $b(\xi_k)\le b(t)$ for any $t\in[\xi_k,\xi_k^+].$

Analogously, we obtain
\begin{eqnarray*}
\Sigma_3\le \sum_{k}
\int_{\xi_{k}}^{\xi_k^+}\biggl(\int_{b^{-1}(\sigma(t))}^tw^p\biggr)^{\frac{\alpha}{p}-1}
\biggl(\int_{a(t)}^{b(t)}v^{p'}\biggr)^{\frac{\alpha}{p'}}w^p(t)\mathrm{d}t\end{eqnarray*} and
\begin{eqnarray*}
\Sigma_4\le \sum_{k}
\int_{\xi_{k+1}^-}^{\xi_{k+1}}\biggl(\int_{t}^{a^{-1}(\sigma(t))}w^p\biggr)^{\frac{\alpha}{p}-1}
\biggl(\int_{a(t)}^{b(t)}v^{p'}\biggr)^{\frac{\alpha}{p'}}w^p(t)\mathrm{d}t.\end{eqnarray*} Thus, we arrive to \eqref{p>1} provided $\alpha/p-1 \ge 0.$ \end{proof}

{\bf Acknowledgements.} The European Commission is grant-giving authority for the research of the author (Project IEF-2009-252784). The work was also partially supported by the Far-Eastern Branch of the Russian Academy of Sciences (Projects 12-I-OMH-01 and 12-II-0-01M-005). The author thanks Dr D.V. Prokhorov from the Computing Centre of the Far Eastern Branch of the Russian Academy of Sciences in Khabarovsk for his interest to the work and useful discussions.

\end{document}